\def\jdlqed{\vbox{\hrule \hbox{\vrule\hbox to
5pt{\vbox to 6pt{\vfil}\hfil}\vrule}\hrule}}
\newcommand{\cqfd}
{%
\mbox{}%
\nolinebreak%
\hfill%
\jdlqed
\medbreak%
\par%
}
\newcommand{\real}{\mathbb R}
\newcommand{\R}{\mathbb R}
\newcommand{\Z}{\mathbb Z}
\DeclareMathOperator{\conv}{conv}
\begin{document}

\title{A Quantitative Doignon-Bell-Scarf Theorem}

\author{Iskander Aliev, Robert Bassett, Jes\'us A. De Loera, and Quentin Louveaux}

\institute{Cardiff University, UK,\\
\email{AlievI@cardiff.ac.uk},\\
\and
University of California, Davis\\
\email{rbassett@math.ucdavis.edu}\\
\email{deloera@math.ucdavis.edu}\\
\and
Universit\'e de Li\`ege, Belgium \\
\email{q.louveaux@ulg.ac.be}}

\date{\today}

\maketitle

\begin{abstract}
The famous Doignon-Bell-Scarf theorem is a Helly-type result about the existence of integer solutions to systems of linear inequalities.
The purpose of this paper is to present the following quantitative generalization:   Given an integer $k$, we prove that there exists a
constant $c(n,k)$, depending only on the dimension $n$ and $k$,
such that if a bounded polyhedron $\{x \in \R^n : Ax \leq b\}$ contains exactly $k$ integer points, then there exists a
subset of the rows, of cardinality no more than $c(n,k)$, defining a polyhedron that contains exactly the same $k$ integer points.
In this case $c(n,0)=2^n$ as in the original case of Doignon-Bell-Scarf for infeasible systems of inequalities.
We work on both upper and lower bounds for the constant $c(n,k)$ and discuss some consequences, including a Clarkson-style algorithm
to find the $l$-th best solution of an integer program with respect to
the ordering induced by the objective function.
\end{abstract}

\section{Introduction}

In a \emph{Helly-type theorem},  there is a family of objects $F$, a property $P$ and a constant $\mu$ such
that if every subfamily of $F$ with $\mu$ elements has property $P$, then the entire family has property $P$. This topic expands a large literature, we recommend \cite{baranykatchalskipach,baranykatchalskipach2,danzergrunbaumklee,eckhoff,wenger} and the references there for a glimpse of this fertile subject.
The classical theorem of  Helly deals with the case when $F$ is a finite family of convex sets in $\R^n$, the constant $\mu$ is  $n+1$,
and the property $P$ is  having a non-empty intersection.  One of the most famous Helly-type theorems, due to its many applications in the theory of integer programming  and computational geometry of numbers, is the  1973 theorem of Doignon \cite{doignon},  later reproved by Bell and Scarf \cite{bell,scarf}.
For an arbitrary matrix $A\in \mathbb R^{m\times n}$ and a set $S\subseteq \{1,\ldots, m\}$,
we denote $A_S\in \mathbb R^{|S|\times n}$ as the submatrix  of $A$ with row-indices in $S$.
\vskip .3cm

\noindent {\bf Theorem} {\em Let $A$ be a $m \times n$ real matrix and $b$ a vector in $\R^m.$
 If the set of integer points $\{x \in \Z^n : Ax \leq b\}$ is empty, then there is a subset $S$ of the rows of $A$, of cardinality
 no more than $2^n$, with the property that the set $\{x \in \Z^n :A_{S} x \leq b_{S}\}$ is also empty.
}
\vskip .3cm

It should be noted that still during the 1970's, in \cite{hoffman} Hoffman created an abstract framework that contains Helly's original theorem as well as the Doignon-Bell-Scarf results.  Here our key contribution is to prove a weighted or quantitive generalization of Doignon-Bell-Scarf's theorem, one that is close in spirit to the quantitative versions of Helly's theorem of B\'ar\'any, Katchalski and Pach \cite{baranykatchalskipach,baranykatchalskipach2}:

 \begin{theorem} \label{k-doignon}
Given $n$, $k$ two non-negative integers there exists a universal constant $c(n,k)$, depending only on $n$ and $k$, such that
for any polyhedron $P=\{x \in \R^n : Ax \leq b\}$ with  exactly $k$ integer points,
there is a subset $S$ of the rows of $A$, of cardinality  no more than $c(n,k)$, with the property that the polyhedron
$\{x \in \R^n: A_S x \leq b_S \}$ has exactly the same $k$ integer points  as $P$.

We refer to $ c(n,k)$ as the minimal such constant.
The upper bound we prove in this paper is $c(n,k) \leq \lceil 2(k+1)/3\rceil 2^n-2\lceil 2(k+1)/3\rceil+2.$
\end{theorem}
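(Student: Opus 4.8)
The plan is to split the two conditions defining a valid subsystem $S$ and to notice that only one of them carries content. By monotonicity of the feasible region under deletion of rows, $P\subseteq P_S=\{x:A_Sx\le b_S\}$ for every $S$, so the $k$ integer points $z_1,\dots,z_k$ of $P$ survive in $P_S$ automatically. The entire substance of the theorem is therefore to choose few rows so that $P_S$ gains no \emph{new} lattice point: for every $w\in\mathbb Z^n\setminus\{z_1,\dots,z_k\}$ we must have some $i\in S$ with $A_iw>b_i$. Writing $I(w)=\{i:A_iw>b_i\}$, which is nonempty since every such $w$ lies outside $P$, the problem becomes choosing a small transversal $S$ that hits every set $I(w)$.

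The engine is the classical Doignon--Bell--Scarf theorem, which is exactly the assertion that an integer-free polyhedron admits such a transversal of size $2^n$; I would use it as a black box on auxiliary lattice-point-free regions built from $P$. Equivalently, I would reformulate the goal as a bound on a quantitative Helly number: let $c(n,k)$ be the least integer such that whenever a finite family of half-spaces has at most $k$ common lattice points, some subfamily of that many half-spaces already has at most $k$ common lattice points. The case $k=0$ is precisely Doignon--Bell--Scarf and supplies the base value $2^n$, and the theorem asserts the stated growth in $k$.

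For the construction I would fix a generic linear functional and sort $z_1,\dots,z_k$ along it, so that the lattice points that must be cut off fall into a sequence of consecutive \emph{gaps}. I would then cover these gaps by \emph{blocks}, each block being a single Doignon--Bell--Scarf application of $2^n$ rows that seals off a bounded lattice-free slab. Two structural facts produce the exact constant. First, all blocks can be arranged to reuse the \emph{same} two extreme facets of $P$ in the chosen direction, so each block past the first costs only $2^n-2$ fresh rows; this yields the global shape $t\cdot 2^n-2t+2$. Second, a packing lemma shows that $t=\lceil 2(k+1)/3\rceil$ blocks suffice, because the gaps can be grouped three at a time and each such triple is sealed by only two blocks.

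The main obstacle is exactly this $3$-for-$2$ packing: one must prove that, whatever the arithmetic geometry of $z_1,\dots,z_k$, the excluded lattice points occupying three consecutive gaps can always be absorbed by two shared Doignon blocks, and that the reused extreme facets genuinely separate \emph{every} intermediate lattice point rather than only the chosen witnesses. Degenerate configurations---clustered or nearly collinear points, and the possibility that $P_S$ becomes unbounded once rows are dropped---must be controlled so that no stray lattice point slips back in. This bookkeeping, rather than any single deep inequality, is where the factor $2/3$ is genuinely earned or lost.
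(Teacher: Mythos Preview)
Your plan diverges from the paper's argument and, as written, does not go through. Two concrete problems.

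\textbf{The slab mechanism does not select rows of $A$.} When you apply Doignon--Bell--Scarf to the lattice-free region $P\cap\{c^{T}z_i<c^{T}x<c^{T}z_{i+1}\}$, the $2^n$ constraints you extract may include the two slab inequalities $c^{T}x\le c^{T}z_{i+1}-\varepsilon$ and $-c^{T}x\le -c^{T}z_i-\varepsilon$, which are \emph{not} rows of $A$. You cannot simply replace them by ``the two extreme facets of $P$ in direction $c$'': for a generic $c$ no facet of $P$ is normal to $c$, and there is no reason two fixed facets of $P$ would appear in \emph{every} Doignon certificate across all slabs. So the accounting $t\cdot 2^n-2t+2$ is not justified by anything you have said; at best the slab idea yields bounds of the form $(k+1)2^n$ with extra work, which is the weaker bound from the earlier conference version the paper explicitly improves upon.

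\textbf{The $3$-for-$2$ packing is the whole theorem and you have not proved it.} You yourself flag that the factor $\lceil 2(k+1)/3\rceil$ is ``where the factor $2/3$ is genuinely earned or lost,'' and then give no argument for it. There is no obvious geometric reason three consecutive gaps along a generic direction should be sealed by only two Doignon blocks; the lattice points to be excluded in distinct gaps are separated by the $z_i$ and need not interact at all.

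For contrast, the paper's proof does not use Doignon as a black box and does not slice along a linear functional. It takes a minimal counterexample with $m$ rows, observes that minimality forces each row to have a witness lattice point, and then pushes the right-hand sides outward to obtain $m$ integer points $y_1,\dots,y_m$ in convex position whose convex hull contains at most $k$ additional lattice points. The heart of the argument is then a purely combinatorial lemma about lattice polytopes: if a set $X\subset\mathbb Z^n$ of $\ge k2^n-2k+3$ points is in convex position, then $\conv(X)\setminus X$ contains at least $\lfloor 3k/2\rfloor$ lattice points. This is proved by induction on $k$, using the parity pigeonhole (two points congruent mod $2$ give a midpoint) to find one or two interior lattice points, and then splitting $X$ by a hyperplane through those points into two pieces to which the inductive hypothesis applies. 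The $3/2$ factor arises from this two-point splitting (Lemma~\ref{twopoints} in the paper), not from any slab packing.
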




Our proof of this theorem (presented in the first section of this paper) generalizes the proof of the Doignon-Bell-Scarf theorem
given in \cite{bell,schrijver} with some counting twists. Our new bound in Theorem \ref{k-doignon}  is a considerable improvement
on the  bound we presented in \cite{ipcoversion} (where the existence of the universal constant $c(n,k)$ was first announced).
The new proof is presented in Section \ref{sectionDoignon}.

\vskip .3cm
We were also able to obtain lower bounds for the case $k=1$ proving that our upper bound is in fact the exact value of $c(n,1)$. This is
the content of the following theorem. An explicit construction of the polytope in question will be presented in Section \ref{lowerb} of our paper:
\vskip .3cm

\begin{theorem}
There exists a polytope $P$ in $\R^n$ that has exactly one interior integer point, $2(2^n-1)$ facets and
one integer point in the relative interior of each facet. Thus all inequalities in $P$ are necessary in the sense that
the removal of any inequality from $P$ results in the inclusion of at least one additional integer point in the interior of $P$.
As a consequence, the upper bound of Theorem \ref{k-doignon} is tight for $k=1$ and thus $c(n,1)=2(2^n-1)$.
\label{lowerboundtheorem}
\end{theorem}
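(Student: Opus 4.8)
The plan is to exhibit a single explicit polytope realizing all three properties and then read off the lower bound for $c(n,1)$ from an irredundancy argument, matching it against the upper bound of Theorem~\ref{k-doignon}. For a nonempty $I\subseteq\{1,\dots,n\}$ write $\chi_I\in\{0,1\}^n$ for its indicator vector, $I^c$ for its complement, and set $a_I=\chi_I-\tfrac{|I|}{n}\chi_{I^c}$. I would take
\[ Q_n=\Big\{x\in\R^n:\ \big|\langle a_I,x\rangle\big|\le |I|\ \text{ for every nonempty } I\subseteq\{1,\dots,n\}\Big\}, \]
a centrally symmetric polytope cut out by exactly $2(2^n-1)$ halfspaces (one pair $\pm$ for each of the $2^n-1$ nonempty subsets). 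The claim to establish is that $\pm\chi_I$ lies in the relative interior of the facet carried by $\pm a_I$, that these are all the facets, and that the origin is the only interior lattice point.

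The heart of the verification is that $Q_n\cap\Z^n$ consists precisely of the origin together with the points $\pm\chi_I$, all of the latter lying on the boundary, and I would prove this by a short support argument. If $x\ge 0$ is a nonzero lattice point with support $I=\mathrm{supp}(x)$, then $\langle a_I,x\rangle=\sum_{i\in I}x_i=\|x\|_1\ge |I|$, since $\chi_{I^c}$ is orthogonal to $x$ and each nonzero coordinate of $x$ is at least $1$; thus the constraint indexed by $I$ is tight (exactly when $x=\chi_I$) or violated, so $x$ is never interior. The case $x\le 0$ follows by central symmetry. Finally, if $x$ has both positive and negative coordinates, put $P=\{i:x_i>0\}$ and $N=\{i:x_i<0\}$; then $\langle a_P,x\rangle=\|x_P\|_1+\tfrac{|P|}{n}\|x_N\|_1>|P|$, so $x$ lies strictly outside $Q_n$. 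Hence the origin is the unique interior integer point.

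Next I would check that each constraint is an irredundant facet with $\pm\chi_I$ strictly in its relative interior. For $J\ne I$ a direct computation with $t=|I\cap J|$ gives $\langle a_J,\chi_I\rangle = t\big(1+\tfrac{|J|}{n}\big)-\tfrac{|J|}{n}|I|$, and one verifies $|\langle a_J,\chi_I\rangle|<|J|$ for every $J\ne I$; the only active constraint at $\chi_I$ is therefore the one indexed by $I$, so a neighborhood of $\chi_I$ within the hyperplane $\langle a_I,x\rangle=|I|$ stays inside $Q_n$. This simultaneously shows that the contact set is $(n-1)$-dimensional (so each of the $2(2^n-1)$ constraints really is a facet, none redundant) and that $\chi_I$ sits in its relative interior with no other lattice point on that facet, which proves the first assertion of the theorem. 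The delicate point — the main obstacle — is precisely this balancing: the coefficient $|I|/n$ must be large enough that no lattice point other than the origin is swallowed into the interior, yet small enough that $\chi_I$ stays strictly interior to every facet $J\ne I$ rather than slipping onto a lower-dimensional face, and the displayed inequalities are exactly what pins down this window.

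It remains to deduce $c(n,1)=2(2^n-1)$. Because Theorem~\ref{k-doignon} counts all integer points of the closed polyhedron, I would first pass to the shrunken copy $P=(1-\varepsilon)Q_n=\{x:|\langle a_I,x\rangle|\le(1-\varepsilon)|I|\}$ for a small $\varepsilon>0$: for $\varepsilon$ small this has the same $2(2^n-1)$ facets, but now $P\cap\Z^n=\{0\}$, since each $\pm\chi_I$ is pushed just outside. Each facet is then irredundant in the strong sense required: deleting the constraint indexed by $I$ re-admits $\chi_I$, because $\chi_I$ satisfies every remaining (shrunken) inequality strictly for $\varepsilon$ small while violating only the deleted one, so removing any single inequality enlarges the integer point set beyond $\{0\}$. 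Consequently, for this $P$ no subset of fewer than $2(2^n-1)$ of the defining rows can reproduce the integer points of $P$, which forces $c(n,1)\ge 2(2^n-1)$. Evaluating the upper bound of Theorem~\ref{k-doignon} at $k=1$ gives $\lceil 4/3\rceil 2^n-2\lceil 4/3\rceil+2 = 2(2^n-1)$, and combining the two bounds yields $c(n,1)=2(2^n-1)$.
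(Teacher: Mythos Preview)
Your argument is correct and gives a genuinely different construction from the paper's. The paper builds $P$ with two families of inequalities: for singletons it uses coefficients in geometric progression $1/2^i$, and for each subset $N$ with $|N|\ge 2$ it uses a carefully chosen inequality whose unique tight lattice point is a specific sign-pattern vector in $\{-1,0,1\}^n$ determined by the least element $l_N$ of $N$. The verification then goes through three lemmas (Lemmas~\ref{lemmageq2}--\ref{allnecessary}) with a somewhat intricate case analysis.

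Your polytope $Q_n=\{x:|\langle a_I,x\rangle|\le |I|\}$ with $a_I=\chi_I-\tfrac{|I|}{n}\chi_{I^c}$ is considerably cleaner: the tight lattice points are simply the indicator vectors $\pm\chi_I$, and the whole verification reduces to the single support argument you give (nonnegative $x$ is bounded by the constraint indexed by $\mathrm{supp}(x)$; mixed-sign $x$ is expelled by the constraint indexed by its positive support) together with the short computation $|\langle a_J,\chi_I\rangle|<|J|$ for $J\ne I$. Your shrinking step $(1-\varepsilon)Q_n$ to pass from ``one interior lattice point'' to ``one lattice point in the closed polytope'' is the right bridge to the formulation of Theorem~\ref{k-doignon}, and one can note that the unshrunken $Q_n$ already witnesses the first sentence of the theorem directly (removing the facet indexed by $I$ moves $\chi_I$ into the interior). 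What your construction buys is a much shorter and more symmetric proof; what the paper's construction buys is perhaps a more ``generic'' example in which the facet normals are in general position, though this is not needed for the lower bound. Two small points worth making explicit in a write-up: boundedness of $Q_n$ follows since the singleton normals $a_{\{i\}}$ are linearly independent, and the facet claim is justified exactly as you say, via the fact that at $\chi_I$ only the $I$-th constraint is active.
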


This implies that the upper bound given in Theorem \ref{k-doignon} for $k=1$ is tight, i.e. $c(n,1)=2(2^n-1).$
Also observe that the upper bounds for $c(n,2)$ and $c(n,1)$ are equal
in Theorem \ref{k-doignon}. It is an interesting problem to find exact values of $c(n,k)$ for $k\geq 2$.
Given the available data, a natural conjecture is that our
bound for $c(n,2)$ is also tight, but we also know that for $c(2,k)$ the bound is not tight for $k\geq 3$.

Finally, in Section \ref{apps} of the paper we discuss some consequences of  Theorem \ref{k-doignon}.
Using standard procedures we extend the version for half-spaces to families of convex sets and present some quantitative corollaries of Theorem \ref{k-doignon}.
K. Clarkson was the first to notice that using Doignon-Bell-Scarf's theorem (or any similar Helly-type intersection result) one can obtain a probabilistic algorithm for integer linear programming \cite{clarkson}. Clarkson's probabilistic algorithm works in great generality for so called \emph{violator spaces} \cite{gaertneretal}.
The remarkable fact is that the running time for the algorithm is {\em linear} in the number of constraints defining the problem, and {\em subexponential} in the dimension  of the problem. In this way, when the dimension of the problem is small, then these are the most efficient methods to solve a large family of optimization problems.
Using violator spaces we present an application of our quantitive Doignon-Bell-Scarf theorem to the problem of finding the $l$ best integer feasible points of an integer program.

We conclude with the remark that our results are part of the fertile and classical study of convex polyhedra with fixed number of (interior) lattice points; a topic that plays an important role in algebra, discrete geometry, and optimization. Indeed, there has been a lot of work, going back to classical results of Minkowski and van der Corput,
 to show that the volume of a lattice polytope $P$ with $k={\rm card}({\rm int}\, P \cap {\Z}^n )\geq1$ is bounded above by a constant
that only depends on $n$ and $k$ (see e.g., \cite{lagariasziegler,pikhurko}). Similarly, the supremum of the possible number of points of ${\Z}^n$
in a  lattice polytope in $\real^n$ containing precisely $k$ points of $\Z^n$ in its interior, can be bounded by a constant that only depends
on $n$ and $k$. Such results play an important role in the theory of toric varieties and the structure of lattice polyhedra (see e.g., \cite{barvinokpommersheim} and the
references therein).  In optimization it was recently shown in \cite{lpf,andersenetal} how lattice-free polyhedra can be applied to the
generation of cutting planes (see also \cite{lpfsurvey}). One can think of lattice-free polyhedra as dual certificates of the infeasibility of a
system of the form $\{x \in \Z^n : Ax=b, Cx \leq d\}$ \cite{ipfarkaslemma}. These convex bodies play an important role in the modern theory of cutting planes \cite{borozancornuejols,deywolsey}.  There has been some interesting hybrid variations of Helly and Doignon-Bell-Scarf in \cite{averkovweismantel,hoffman}.

\section{A Generalization of Doignon-Bell-Scarf's theorem}
\label{sectionDoignon}

In this section we will prove Theorem \ref{k-doignon}. The constant $c(n,k)$ we provide is $\lceil 2(k+1)/3\rceil 2^n-2\lceil 2(k+1)/3\rceil+2$. This is an
improvement from the constant presented in an earlier conference version of this paper \cite{ipcoversion}.

\vskip .3cm



\noindent [{\em Proof of Theorem} \ref{k-doignon}]

The proof proceeds by contradiction.  We choose a counterexample system with minimal number $m$ of  linear inequalities,
\begin{equation}
a_1x\le \beta_1,\ldots, a_m x\le \beta_m, x \in \R^n\,.
\label{system}
\end{equation}

This counterexample system  (\ref{system}) defines a polyhedron with exactly $k$ integer solutions 
and $m\geq \lceil 2(k+1)/3\rceil 2^n-2\lceil 2(k+1)/3\rceil+3$. Again, remember
that $m$ is smallest possible among all counterexamples to Theorem  \ref{k-doignon}. The minimality of the chosen counterexample implies that
if we delete any of the constraints in (\ref{system}), the remaining system has at least $k+1$ integer solutions (one is not able to remove any of the constraints
otherwise that would produce a smaller counterexample). This says that there exist integer vectors $x_1, \ldots, x_m$ such that $x_j$ violates $a_jx \le \beta_j$, but
satisfies all other inequalities in (\ref{system}). Let $Q$ be a cube in $\R^n$ that contains all integer solutions to (\ref{system}) and all points  $x_1, \ldots, x_m$.
%
Consider the set of integer points
\begin{equation*}
G=Q\cap\Z^n\,.
\end{equation*}

Since the polyhedron $Q$ is bounded, the set $G$ is finite.
Now we will associate a new system to (\ref{system}) in a special critical shape constructed  by shifting its hyperplanes.
Consider the set $\Gamma\subset \R^m$ of the vectors $(\gamma_1, \ldots, \gamma_m)$ such that
\begin{equation}\label{facet}
\gamma_j\ge \min\{a_j z : z\in G, a_j z >\beta_j\}
\end{equation}
and
\begin{equation}\label{notmany}
\text{the system }
a_1x<\gamma_1,\ldots, a_mx<\gamma_m
\text{ has exactly }k\\ \text{ integer solutions in }G.
\end{equation}



The set $\Gamma$ is nonempty as we can take the equality in (\ref{facet}).  Next notice that  (\ref{notmany}), together with the
lower bounds on the $\gamma_i$, implies that any integer solution of the system (\ref{system}) remains feasible for the system
$a_1x<\gamma_1,\ldots, a_mx<\gamma_m$ for $\gamma \in \Gamma$. Thus, for all $\gamma \in \Gamma$, $a_1x<\gamma_1,\ldots, a_mx<\gamma_m$
share exactly the same $k$ integer solutions in $G$ as (\ref{system}).

Observe also that the set $\Gamma$ is bounded. Otherwise, $\gamma_j$ for some $j$ grows arbitrarily large. In particular $\gamma_j$ is much larger than $a_j x_j$, for  the
point $x_j\in G$ we associated with the constraint $a_jx \leq \beta_j$ in (\ref{system}). Note $x_j$ is by  construction not a solution of system (3). But  by the size of $\gamma_j$,
the point  $x_j$ satisfies all the inequalities $a_1x<\gamma_1,\ldots, a_m x<\gamma_m$.  Hence, $x_j$ is an additional integer feasible point for (\ref{notmany}), which is a contradiction.

\begin{claim} There is a point $(\nu_1, \ldots, \nu_m)\in \Gamma$ such that
\begin{equation}
\mbox{for each }j=1,\ldots,m \mbox{ there exists }y_j\in G \mbox{ so that }a_j y_j =\nu_j
\mbox{ and }a_i y_j <\nu_i \, (i\neq j)\,.
\label{extreme_point}
\end{equation}
\end{claim}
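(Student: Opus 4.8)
My plan is to single out a point $\nu\in\Gamma$ that is \emph{coordinatewise maximal} — no single coordinate $\nu_j$ can be increased while keeping the vector in $\Gamma$ — and then read off (\ref{extreme_point}) from this maximality. The main tool is the counting function: for $\gamma\in\R^m$ let $N(\gamma)$ denote the number of $z\in G$ with $a_iz<\gamma_i$ for all $i$, so that condition (\ref{notmany}) is precisely $N(\gamma)=k$. Two elementary features of $N$ drive everything: it is non-decreasing in each coordinate (enlarging a bound only enlarges the feasible set), and it is left-continuous in each coordinate (a point $z$ with $a_jz=\gamma_j$ is counted only once $\gamma_j$ \emph{strictly} exceeds $a_jz$, and between consecutive values $a_jz$, $z\in G$, the function is constant).

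I would then construct $\nu$ greedily, coordinate by coordinate. Write $\ell_j$ for the lower bound appearing on the right side of (\ref{facet}); the set in that minimum is nonempty because $x_j\in G$ violates $a_jx\le\beta_j$, and we observed above that $(\ell_1,\dots,\ell_m)\in\Gamma$. Suppose $\nu_1,\dots,\nu_{j-1}$ have been chosen with $(\nu_1,\dots,\nu_{j-1},\ell_j,\dots,\ell_m)\in\Gamma$, and set $\nu_j:=\sup\{\,t\ge\ell_j:(\nu_1,\dots,\nu_{j-1},t,\ell_{j+1},\dots,\ell_m)\in\Gamma\,\}$. Since $\Gamma$ is bounded this is finite, and since $t\mapsto N(\nu_1,\dots,\nu_{j-1},t,\ell_{j+1},\dots,\ell_m)$ is non-decreasing, equals $k$ at $t=\ell_j$, and is left-continuous, the set in the supremum is the closed interval $[\ell_j,\nu_j]$; hence the supremum is attained and $(\nu_1,\dots,\nu_j,\ell_{j+1},\dots,\ell_m)\in\Gamma$. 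After $m$ steps this yields $\nu=(\nu_1,\dots,\nu_m)\in\Gamma$, which is coordinatewise maximal: for $t>\nu_j$ all lower bounds (\ref{facet}) still hold, while monotonicity of $N$ together with $\nu_i\ge\ell_i$ for $i>j$ gives $N(\nu_1,\dots,\nu_{j-1},t,\nu_{j+1},\dots,\nu_m)\ge N(\nu_1,\dots,\nu_{j-1},t,\ell_{j+1},\dots,\ell_m)\ge k+1$ (the last vector lies outside $\Gamma$ by the choice of $\nu_j$, yet still has $N\ge k$), so the perturbed vector leaves $\Gamma$.

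It remains to derive (\ref{extreme_point}) from coordinatewise maximality. Fix $j$. For every $t>\nu_j$ we have just seen $N(\nu_1,\dots,\nu_{j-1},t,\nu_{j+1},\dots,\nu_m)\ge k+1>k=N(\nu)$, so some point of $G$ is feasible for the system with $j$-th bound raised to $t$ but not for the one with $j$-th bound $\nu_j$; since only the $j$-th bound changed, such a point $y$ satisfies $a_iy<\nu_i$ for $i\ne j$ and $\nu_j\le a_jy<t$. Letting $t\downarrow\nu_j$ along a sequence and using that $G$ is finite, a single $y_j\in G$ serves for infinitely many $t$, which squeezes $a_jy_j$ to equal $\nu_j$ exactly while $a_iy_j<\nu_i$ for $i\ne j$ persists; this is (\ref{extreme_point}). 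The one genuinely delicate point is that $\Gamma$ need \emph{not} be closed (the inequalities in (\ref{notmany}) are strict), so the usual ``maximize a linear functional over a compact set'' shortcut needs care; monotonicity and left-continuity of $N$ are exactly what make the greedy suprema attained and what allow the final squeezing of $a_jy_j$ onto the hyperplane. (Alternatively one can invoke Zorn's lemma for a maximal element of $\Gamma$ in the componentwise order, using that chains have suprema in $\Gamma$ by the same one-sided continuity; the greedy route is more elementary.)
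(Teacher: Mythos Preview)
Your argument is correct and is essentially the same as the paper's: both push each coordinate $\nu_j$ up to its supremum within $\Gamma$, use finiteness of $G$ (your left-continuity of $N$) to see that the supremum is attained in $\Gamma$, and then read off the witness $y_j$ from the fact that a further increase of $\nu_j$ would admit a new integer point. The only cosmetic differences are that the paper starts from an arbitrary $\nu\in\Gamma$ and replaces one coordinate at a time whenever (\ref{extreme_point}) fails there (noting that increasing $\nu_j$ cannot destroy an already-established witness $y_i$, since $a_jy_i<\nu_j\le\nu_j'$), whereas you start from the base point $(\ell_1,\dots,\ell_m)$, sweep the coordinates in order, and package the monotonicity/finiteness reasoning via the counting function $N$; your extra line verifying global coordinatewise maximality (comparing against the $\ell$-tail) is a nice touch that the paper leaves implicit.
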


Before a formal proof of the Claim there is a an intuitive justification for it. Since the set $G$ is finite, we can define the numbers $\nu_1, \ldots, \nu_m$ satisfying the Claim by changing the right-hand side of the inequalities one by one (i.e., shifting the hyperplanes), while controlling each time how many points of $G$ are within the new polyhedron.

\textit{Proof of Claim}: Take any point $(\nu_1, \ldots, \nu_m)\in \Gamma$ and suppose that for some $j$ this property does not hold. Consider
\begin{equation}
\nu'_j=\sup\{\nu: (\nu_1,\ldots,\nu_{j-1},\nu, \nu_{j+1}, \ldots, \nu_m)\in \Gamma\}\,.
\label{max_gamma}
\end{equation}
The supremum in (\ref{max_gamma}) is finite as the set $\Gamma$ is bounded. If $(\nu_1,\ldots,\nu_{j-1},\nu'_j, \nu_{j+1}, \ldots, \nu_m)\notin \Gamma$ then, by (\ref{notmany}),
for any $\delta>0$ there should exist a point $z\in G$ such that $\nu'_j-\delta\le a_jz<\nu'_j$.
This is impossible as $G$ is finite. Consequently, $(\nu_1,\ldots,\nu_{j-1},\nu'_j, \nu_{j+1}, \ldots, \nu_m)\in \Gamma$.
Observe that there should exist $y_j\in G$ with $a_j y_j =\nu'_j$ and $a_i y_j <\nu_i (i\neq j)$. Otherwise  $(\nu_1,\ldots,\nu_{j-1},\nu'_j+\epsilon, \nu_{j+1}, \ldots, \nu_m)\in \Gamma$ for some $\epsilon>0$ as $G$ is a finite set. Hence we can replace $\nu_j$ by $\nu'_j$. After at most $m$ such replacements
we will construct a point satisfying (\ref{extreme_point}). \cqfd

By the established Claim, the convex hull $H=\conv(\{y_1, \ldots, y_m\})$ contains at most $k$ integer points distinct from $y_1, \ldots, y_m$.

%



The property of the set $\{y_1, \ldots, y_m\}$ expressed by (\ref{extreme_point}) is very important and as we will use it several times later, we formally name it.

\begin{definition}
Let $X$ be a finite set in $\Z^n$. We say that $X$ satisfies the \emph{support hyperplane property}
if for every $y\in X$, there exists a half-space $f^Tx\leq g$ such that $f^Ty=g$ and
$f^Tz<g$ for every $z\in X, z\neq y.$
Furthermore, we say that the inequality $f^Tx\leq g$ fulfills the support hyperplane property for $y$.
\end{definition}
Observe that the support hyperplane property is equivalent to saying that all members of $X$ are
vertices of $\conv(X)$.

The application of the following four lemmas directly gives the desired contradiction and finishes the proof of the Theorem:

\begin{lemma}
Consider a finite set $X\subset \Z^n$ with $|X|\geq 2^n+1$
that satisfies the support hyperplane property.
Then there exists a  point $z\in (\conv(X)\cap \Z^n)\setminus X$.
\label{claim22}
\end{lemma}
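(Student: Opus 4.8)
The plan is to use a pigeonhole argument on residues modulo $2$, exactly as in the classical Doignon--Bell--Scarf proof, and then exploit the support hyperplane property to verify that the resulting midpoint is a genuinely new lattice point. First I would partition $X$ according to the parity vector: each $y \in X$ maps to $y \bmod 2 \in (\Z/2\Z)^n$. Since $|X| \geq 2^n + 1$ and there are only $2^n$ parity classes, there exist two distinct points $y, y' \in X$ with $y \equiv y' \pmod 2$. Then the midpoint $z = \tfrac{1}{2}(y + y')$ lies in $\Z^n$, and clearly $z \in \conv(X)$ since it is a convex combination of two members of $X$.

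The key remaining step is to show $z \notin X$. This is where the support hyperplane property enters, and it is the part that distinguishes this argument from merely repeating the textbook proof. Suppose for contradiction that $z \in X$. By the support hyperplane property applied to the point $z$, there is an inequality $f^Tx \leq g$ with $f^Tz = g$ and $f^Tw < g$ for every $w \in X \setminus \{z\}$. In particular, since $y \neq z$ and $y' \neq z$ are both in $X$, we get $f^Ty < g$ and $f^Ty' < g$. But then $f^Tz = f^T\bigl(\tfrac{1}{2}(y+y')\bigr) = \tfrac{1}{2}(f^Ty + f^Ty') < g$, contradicting $f^Tz = g$. Hence $z \notin X$, and $z$ is the desired point of $(\conv(X) \cap \Z^n) \setminus X$.

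I expect the main (and essentially only) obstacle to be making sure the two points produced by pigeonhole are actually \emph{distinct} — which is guaranteed by the strict inequality $|X| \geq 2^n + 1$ rather than $|X| \geq 2^n$ — and then correctly using that $z \ne y$ and $z \ne y'$ so that both strict inequalities $f^Ty < g$, $f^Ty' < g$ are available. One should note that if $z$ coincided with $y$ or $y'$ the midpoint argument would be vacuous, but $z = y$ would force $y = y'$ (as $z = \tfrac12(y+y')$), contradicting distinctness; similarly $z = y'$ is impossible. So the distinctness from pigeonhole propagates cleanly, and no further case analysis is needed.
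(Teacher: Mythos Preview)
Your proof is correct and follows exactly the same approach as the paper: pigeonhole on parity classes in $(\Z/2\Z)^n$ to obtain two distinct points $y,y'\in X$ with integer midpoint $z$, then the support hyperplane property to rule out $z\in X$. The paper states the last step in a single sentence, whereas you spell out the strict-inequality contradiction explicitly, but the argument is identical.
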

\textit{Proof of Lemma}:
Since $|X|\geq 2^n+1$, by the pigeonhole principle
there exist $y_{1},y_{2}\in X$ with $y_{1}\neq y_{2}$ and $y_{1} \equiv y_{2}(\text{mod}\ 2).$
Therefore $z=\frac{1}{2}(y_{1}+y_{2})\in \Z^n$. Since $X$ satisfies the support hyperplane property, we conclude that $z\notin X$. 

\begin{lemma}
Consider a finite set $X\subset \Z^n$ that satisfies the support hyperplane property.
Suppose there exists a point $z\in (\conv(X)\cap \Z^n)\setminus X$.
Then there exist disjoint subsets $X_1, X_2\subseteq X$
with $X_1 \cup X_2=X$ such that both
$X_1\cup \{z\}$ and $X_2\cup \{z\}$ satisfy the support hyperplane property.
\label{claim23}
\end{lemma}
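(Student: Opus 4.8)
The plan is to decouple the statement into two essentially independent parts. For \emph{any} partition $X=X_1\cup X_2$ into disjoint pieces I claim each original point of $X_i$ automatically stays a vertex of $\conv(X_i\cup\{z\})$; the only thing that then needs to be engineered is that $z$ itself becomes a vertex of both $\conv(X_1\cup\{z\})$ and $\conv(X_2\cup\{z\})$. Almost all of the content lies in the first claim, and that is where the support hyperplane property of $X$ and the hypothesis $z\in\conv(X)$ get used.

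First I would establish the key observation: for every $w\in X$, the support hyperplane of $w$ in $X$ strictly separates $w$ from $z$ as well. Indeed, pick a half-space $h^Tx\le g$ fulfilling the support hyperplane property for $w$, so $h^Tw=g$ and $h^Tv<g$ for all $v\in X\setminus\{w\}$, and write $z=\sum_{v\in X}\lambda_v v$ with $\lambda\ge 0$, $\sum_v\lambda_v=1$ (possible since $z\in\conv(X)$). Then $h^Tz=\sum_v\lambda_v h^Tv\le g$, and since every summand $\lambda_v(g-h^Tv)$ is nonnegative, equality would force $\lambda_v=0$ for each $v\ne w$, i.e.\ $z=w$; as $z\notin X$ this is impossible, so $h^Tz<g$. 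Consequently the same half-space $h^Tx\le g$ witnesses that $w$ is a vertex of $\conv(Y\cup\{z\})$ for \emph{every} subset $Y\subseteq X$ with $w\in Y$. In particular, whatever partition $X=X_1\cup X_2$ we end up choosing, every member of $X_i$ will be a vertex of $\conv(X_i\cup\{z\})$.

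Next I would produce the partition. Since $z\notin X$, each $w-z$ ($w\in X$) is a nonzero vector, so I can choose $f\in\R^n$ with $f^T(w-z)\ne 0$ for all $w\in X$ (we only need to avoid the finitely many hyperplanes $(w-z)^\perp$). Put $X_1=\{w\in X: f^Tw<f^Tz\}$ and $X_2=\{w\in X: f^Tw>f^Tz\}$; these are disjoint with union $X$. Both are nonempty: if, say, $X_1=\emptyset$ then $f^Tw>f^Tz$ for all $w\in X$, contradicting $f^Tz=\sum_v\lambda_vf^Tv\ge\min_{v\in X}f^Tv$. Over $X_1\cup\{z\}$ the functional $f$ is maximized uniquely at $z$, so $z$ is a vertex of $\conv(X_1\cup\{z\})$; applying $-f$ to $X_2$ likewise makes $z$ a vertex of $\conv(X_2\cup\{z\})$. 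Together with the observation of the previous paragraph, this shows $X_1\cup\{z\}$ and $X_2\cup\{z\}$ both satisfy the support hyperplane property, as required.

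The only delicate point -- the \textbf{hard part}, modest as it is -- is the equality analysis in the second paragraph: one must notice that because $z$ lies in $\conv(X)$ but is \emph{not} one of the vertices whose convex combination it is, it must sit strictly on the interior side of every support hyperplane of $X$, and it is precisely this that prevents $z$ from destroying any of the old vertices once it is added. The rest is a standard strict-separation argument; incidentally, the integrality of $z$ plays no role in this lemma (it is only needed, via Lemma~\ref{claim22}, to guarantee that such a $z$ exists in the first place).
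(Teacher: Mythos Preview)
Your proof is correct and follows essentially the same route as the paper: split $X$ by a hyperplane through $z$ that misses every point of $X$, and reuse each point's original support hyperplane in the smaller set. In fact you supply the one justification the paper's proof omits---namely, the convex-combination argument showing that $z$ lies strictly on the interior side of every support hyperplane of $X$, which is precisely what makes the old witnesses still work once $z$ is adjoined; your observation that integrality of $z$ is irrelevant here is also apt.
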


\textit{Proof of Lemma:}  There exists a hyperplane defined by the equation $\bar f^T x=\bar g$ such that $\bar f^T z= \bar g$
and the equality does not hold for any other member of $X$.
We can split the other members of $X$ into two disjoint sets $X_1=X\cap \{x\in \R^n :\bar f^Tx<\bar g\}$
and $X_2=X\cap \{x\in \R^n :\bar f^Tx>\bar g\}$.

The result follows since for every $x\in X_i$, the inequality fulfilling the
support hyperplane property for $x$ in $X$ fulfills  the support hyperplane property for $x$ in $X_i \cup \{z\}$.
The inequality $\bar f^Tx\leq \bar g$ (respectively $\bar f^Tx\geq \bar g$) fulfills the support hyperplane property
for $z$ in $X_1\cup \{z\}$ (respectively in $X_2\cup \{z\}$). \cqfd

\begin{lemma}
Consider a finite set $X\subset \Z^n$ that satisfies the support hyperplane property.
Suppose  $z_1, z_2\in \conv(X)\cap \Z^n$ with $z_1,z_2\notin X$ and $z_1\neq z_2$.
There exist disjoint subsets $X_1, X_2\subseteq X$
with $|X_1| + |X_2|\ge |X|-2 $ such that both
$X_1\cup \{z_1, z_2\}$ and $X_2\cup \{z_1,z_2\}$ satisfy the support hyperplane property.
\label{twopoints}
\end{lemma}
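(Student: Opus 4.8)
The plan is to run essentially the argument of Lemma~\ref{claim23}, but to split $X$ with a hyperplane passing through the \emph{entire} line $\ell$ spanned by $z_1$ and $z_2$ instead of through a single point. First I would produce a hyperplane $H=\{x\in\R^n:f^Tx=g\}$ with $\ell\subseteq H$ and $X\cap H\subseteq\ell$. Translating so that $\ell$ runs through the origin and passing to the quotient $\R^n/\ell\cong\R^{n-1}$, the hyperplanes of $\R^n$ containing $\ell$ correspond to the linear hyperplanes of $\R^{n-1}$, while each of the finitely many points of $X\setminus\ell$ maps to a nonzero vector of $\R^{n-1}$; since the linear functionals on $\R^{n-1}$ vanishing at a fixed nonzero vector form a proper subspace of the dual, a generic functional avoids all of them and produces the desired $H$. (For $n=2$ this pencil degenerates to $\ell$ itself, which already works; for $n=1$ the statement is trivial since then $|X|\le 2$.)

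With $H$ fixed, set $X_1=\{y\in X:f^Ty<g\}$ and $X_2=\{y\in X:f^Ty>g\}$; these are disjoint with $X\setminus(X_1\cup X_2)=X\cap H\subseteq X\cap\ell$. Because $X$ has the support hyperplane property, every point of $X$ is a vertex of $\conv(X)$, and no three vertices of a polytope are collinear, so $|X\cap\ell|\le 2$ and hence $|X_1|+|X_2|\ge|X|-2$.

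It remains to check that $X_1\cup\{z_1,z_2\}$ has the support hyperplane property, the argument for $X_2\cup\{z_1,z_2\}$ being symmetric. For $y\in X_1$, the inequality fulfilling the support hyperplane property for $y$ in $X$ still does so in $X_1\cup\{z_1,z_2\}$: the remaining members of $X_1$ lie strictly on the feasible side as they did in $X$, and $z_1,z_2$ lie strictly on the feasible side by the convex-combination observation already used in Lemma~\ref{claim23}, since any point of $\conv(X)$ different from $y$ is strictly interior to a supporting halfspace of $y$ and $z_1,z_2\in\conv(X)\setminus X$ are distinct from $y$. For $z_1$ and $z_2$, observe that $f^Tx\le g$ is valid on $P_1:=\conv(X_1\cup\{z_1,z_2\})$ with equality only at $z_1,z_2$ (it is strict on $X_1$ by construction), so $H$ supports $P_1$ and $P_1\cap H=\conv(\{z_1,z_2\})=[z_1,z_2]$ is a face of $P_1$; as $z_1\ne z_2$, both are vertices of this segment, hence vertices of $P_1$. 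Therefore every element of $X_1\cup\{z_1,z_2\}$ is a vertex of $P_1$, i.e.\ the support hyperplane property holds.

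The genuinely delicate step is the first one: arranging the splitting hyperplane so that it meets $X$ in at most two points. This is exactly why one must pass the hyperplane through the line $z_1z_2$ and argue genericity in the pencil of hyperplanes containing it; once $H$ is in place, the rest is the same bookkeeping with support inequalities as in Lemmas~\ref{claim22} and~\ref{claim23}, together with the standard fact that a vertex of a face of a polytope is a vertex of the polytope.
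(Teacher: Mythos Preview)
Your proof is correct and follows essentially the same approach as the paper: choose a hyperplane through the line $\ell=z_1z_2$ that meets $X$ only in $X\cap\ell$, split $X$ by this hyperplane, and verify the support hyperplane property on each side. The only differences are in execution---you justify the existence of the hyperplane explicitly via a genericity argument in the quotient $\R^n/\ell$ (the paper simply asserts it), and you certify that $z_1,z_2$ are vertices of $\conv(X_i\cup\{z_1,z_2\})$ by the ``vertex of a face is a vertex'' principle rather than by the paper's small-rotation argument.
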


\textit{Proof of Lemma:}
Since $X$ satisfies the support hyperplane property, the line $L$
spanned by $z_1, z_2$ intersects $X$ in at most two points.
Consequently, there exists a hyperplane $\bar f^T x=\bar g$ such that $\bar f^T z_1= \bar f^T z_2= \bar g$
and the equality holds for at most two points of $X$ (precisely for the points in $X\cap L$).
We can split the points of the set $Y=X\setminus (X\cap L)$ into two disjoint sets $X_1=Y\cap \{x\in \R^n : \bar f^Tx<\bar g\}$
and $X_2=Y\cap \{x\in \R^n : \bar f^Tx>\bar g\}$. Clearly, $|X_1| + |X_2|\ge |X|-2$.

Let us now show that $X_1\cup \{z_1, z_2\}$ and $X_2\cup \{z_1,z_2\}$ satisfy the support hyperplane property.
First, for every $x\in X_i$, the inequality fulfilling the
support hyperplane property for $x$ in $X$ fulfills  the support hyperplane property for $x$ in  $X_i \cup \{z_1, z_2\}$.
Next, both points $z_1, z_2$ are on the hyperplane $\bar f^T x=\bar g$ and this hyperplane does not intersect either of the finite sets $X_1, X_2$.
Thus applying a sufficiently small rotation of the  hyperplane $\bar f^T x=\bar g$ around $z_1$ we can obtain a hyperplane $\bar f_1^T x=\bar g_1$
such that  $\bar f_1^T z_1=\bar g_1$ and $\bar f_1^T y<\bar g_1$ for any point  $y\in X_1\cup\{z_2\}$, so
that the constructed hyperplane fulfills the support hyperplane property for $z_1$ in $X_1\cup \{z_1, z_2\}$. In the same manner we can construct
the hyperplanes fulfilling the support hyperplane property for $z_1$ in $X_2\cup \{z_1, z_2\}$ and for $z_2$ in $X_i\cup \{z_1, z_2\}$, $i=1,2$.\cqfd

\begin{lemma}
Let $n\geq 2$ and $k$ be  natural numbers.
Consider a finite set $X\subset \Z^n$ and assume that it satisfies the support
hyperplane property.
If the cardinality $|X|\geq k2^n-2k+3,$ then there exist at least $\lfloor 3k/2\rfloor$ different integer
points in $\conv(X)\setminus X$.
\label{main_lemma}
\end{lemma}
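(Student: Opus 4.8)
The plan is to prove a sharper statement by induction and then specialize. Set $D = 2^n - 2$, which is at least $2$ because $n \ge 2$. For a finite set $W \subset \Z^n$ with the support hyperplane property write $N(W) = |\conv(W) \cap \Z^n \setminus W|$ for its number of new lattice points, and put $g(j) = \lfloor 3j/2 \rfloor$, $k^*(s) = \max(0, \lfloor (s-3)/D \rfloor)$ and $\phi(s) = g(k^*(s))$. I would establish $N(X) \ge \phi(|X|)$ for every such $X$. Since $k^*(kD+3) = k$ gives $\phi(kD+3) = g(k) = \lfloor 3k/2\rfloor$ and $\phi$ is non-decreasing, the hypothesis $|X| \ge k2^n - 2k + 3 = kD + 3$ then yields $N(X) \ge \lfloor 3k/2\rfloor$, as required. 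The induction runs on $T(X) = |\conv(X) \cap \Z^n|$, the total number of lattice points of the hull, which I will make strictly decrease at every recursive call; this is essential, since an unbalanced split need not decrease the vertex count $|X|$, whereas $T$ always drops.

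The engine is a pair of disjoint-union recursions. Given a new point $z \in \conv(X) \setminus X$, Lemma \ref{claim23} splits $X = X_1 \sqcup X_2$ with both $X_i \cup \{z\}$ satisfying the support hyperplane property and lying in opposite closed half-spaces of the separating hyperplane $H$, meeting only at $z$. Consequently the new points produced inside the two children, together with $z$, are pairwise distinct and all lie in $\conv(X) \setminus X$, so $N(X) \ge 1 + N(X_1 \cup \{z\}) + N(X_2 \cup \{z\})$; moreover each $X_i$ is nonempty (else $z$ would be separated from $X$, contradicting $z \in \conv(X)$), so each child hull omits a lattice point of $X$ and $T$ drops. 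For the stronger recursion I start from two distinct new points and first replace the second by the lattice point other than $z_1$ nearest to $z_1$ on the segment joining them, making $[z_1,z_2]$ primitive; then the two child hulls of Lemma \ref{twopoints} meet only along $[z_1,z_2]$, whose only lattice points are the excluded endpoints $z_1, z_2$, and the same separation argument gives $N(X) \ge 2 + N(X_1 \cup \{z_1,z_2\}) + N(X_2 \cup \{z_1,z_2\})$, again with $T$ strictly smaller in each child.

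For the inductive step I split on $k = k^*(|X|)$. The cases $k = 0$ and $k = 1$ are immediate, the latter being exactly Lemma \ref{claim22}. For $k \ge 2$ one has $|X| \ge 2D + 3 = 2^{n+1}-1$, so Lemma \ref{claim22} produces a first point $z_1$; Lemma \ref{claim23} then splits off a child of size at least $2^n + 1$, on which Lemma \ref{claim22} produces $z_2 \ne z_1$. After the primitivity adjustment I apply the two-point recursion and the inductive hypothesis to obtain $N(X) \ge 2 + g(k_1) + g(k_2)$, where $k_i = k^*(s_i)$ and $s_1, s_2$ are the child sizes. Two numerical facts finish the proof: since $s_1 + s_2 = |X| + 2 \ge kD + 5$ while each $s_i \le (k_i + 1)D + 2$, one gets $k_1 + k_2 \ge k - 1$; and for any $m$ the minimum of $g(k_1) + g(k_2)$ over $k_1 + k_2 = m$ equals $g(m)$ for odd $m$ and $g(m) - 1$ for even $m$. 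Together these give $2 + g(k_1) + g(k_2) \ge g(k)$ for both parities of $k$, which is the desired $\phi(|X|)$.

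The delicate point, and the reason both recursions appear, is the parity bookkeeping. The single-point recursion alone yields only $g(k) - 1$ in the worst split, because at the thresholds $|X| = kD + 3$ with $k$ even the value $g(k)$ jumps by $2$; the missing unit is recovered by the ``$+2$'' of the two-point recursion, which exactly cancels the half-integer floor loss recorded in the second numerical fact. The main obstacle I anticipate is the simultaneous control required in the two-point step: that the two child hulls share nothing but the primitive segment (so that no interior lattice point is counted twice, which is precisely why the nearest-lattice-point replacement is needed), that a point found inside a child is never one of the at most two collinear vertices of $X$ discarded by Lemma \ref{twopoints} (these lie on the line through $z_1,z_2$ but outside the segment, since a vertex cannot be strictly between two points of the hull), and that the bound $k_1 + k_2 \ge k - 1$ is sharp enough to absorb the floor losses. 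The verification of the two numerical facts is routine but must be carried out carefully with the floor functions.
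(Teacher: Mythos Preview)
Your argument is correct and follows essentially the same route as the paper's: find $z_1$ via Lemma~\ref{claim22}, split with Lemma~\ref{claim23}, find $z_2$ in the larger half, make the segment primitive, apply Lemma~\ref{twopoints}, and then do the floor-function arithmetic $\lfloor 3(l-1)/2\rfloor+\lfloor 3(k-l)/2\rfloor+2\ge\lfloor 3k/2\rfloor$. Your functions $k^*(s)$ and $\phi(s)$ repackage exactly the paper's case split on the interval $(l-1)(2^n-2)+1\le |X_1|\le l(2^n-2)$, and your two ``numerical facts'' are the same inequality written in a different parametrisation. Two minor remarks: you should write $s_1+s_2\ge |X|+2$ rather than ``$=$'' (Lemma~\ref{twopoints} only gives $|X_1|+|X_2|\ge |X|-2$), though the direction of the error is harmless; and your parity statement for even $m$ fails at $m=0$, but this case never arises since $k\ge 2$ forces $k_1+k_2\ge 1$.

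The one genuine organisational difference is your choice to induct on $T(X)=|\conv(X)\cap\Z^n|$ rather than on $k$. The paper, inducting on $k$, has to deal separately with the possibility that one of the $X_i$ in the two-point split is empty: it observes that then $|X_2\cup\{z_1,z_2\}|=|X|$ and \emph{replaces} $X$ by this set, iterating until both halves are nonempty (termination coming from finiteness of $\conv(X)\cap\Z^n$). Your induction on $T$ absorbs this loop automatically, since---as you correctly argue via the separating hyperplane and the primitivity of $[z_1,z_2]$---the child hulls always omit at least one lattice point of $\conv(X)$, so $T$ strictly drops even in the degenerate split. This is a modest but real tidying of the paper's argument.
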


\textit{Proof of Lemma:}
We will proceed by induction on $k$. For $k=1$, the result follows from Lemma \ref{claim22}.

We now assume that the result is true up to $k-1$ and prove it for $k\geq 2$.
Assume that $|X|\geq k2^n-2k+3$. We have $|X|\geq 2^n+1$ and therefore by
Lemma \ref{claim22}, there exists
a point $z_1\in \mathbb Z^n$ in $\conv(X)\setminus X$.

By Lemma \ref{claim23}, applied to $X$ and $z_1$, there exist disjoint subsets $Y_1, Y_2\subseteq X$
with $Y_1 \cup Y_2=X$ such that both
$Y_1\cup \{z_1\}$ and $Y_2\cup \{z_1\}$ satisfy the support hyperplane property.

Assume without loss of generality that $|Y_2|\ge |Y_1|$. Since the sets $Y_1, Y_2$ are disjoint, $Y_2$ has cardinality at least $\lceil \frac{|X|}{2}\rceil\ge 2^n$ (note that $k\ge 2$).
By Lemma \ref{claim22}, applied to $Y_2\cup\{z_1\}$, there exists a point $z_2\in \mathbb Z^n$ in $\conv(Y_2\cup\{z_1\})\setminus (Y_2\cup\{z_1\})$. In particular, $z_1\neq z_2$.
Replacing, if necessary, $z_1, z_2$ by adjacent integer points in $\conv(\{z_1, z_2\})$, we may also assume that ${\rm card}(\conv(\{z_1, z_2\})\cap \Z^n)=2$.

Next, by Lemma \ref{twopoints}, applied to $X$ and the constructed points $z_1, z_2$, there exist disjoint subsets $X_1, X_2\subseteq X$
with $|X_1| + |X_2|\ge |X|-2 $ such that both
$X_1\cup \{z_1, z_2\}$ and $X_2\cup \{z_1,z_2\}$ satisfy the support hyperplane property.
It follows from the proof of Lemma \ref{twopoints} that if one of the sets, say $X_1$, is empty, then $|X_2\cup \{z_1,z_2\}|=|X|$.
In this case it is enough to prove the lemma for the set $X$ replaced by the set $X_2\cup \{z_1,z_2\}$. Since the set $\conv(X)\cap \Z^n$ is finite, after a finite
number of replacements we obtain the nonempty sets $X_1$, $X_2$.

Assume now without loss of generality $0<|X_1|\le |X_2|$. We will consider the following two cases. First,
suppose that there exists $l\in\mathbb Z$ with $1\leq l\leq k$ such that
\begin{equation}
(l-1)2^n-2(l-1)+1 \leq |X_1| \leq l 2^n-2l.
\label{x1 cardinality}
\end{equation}
Since $X_1$ and $X_2$ are disjoint subsets of $X$ with $|X_1| + |X_2|\ge |X|-2$ and since $|X|\geq k2^n-2k+3,$
it follows from the upper bound in  \eqref{x1 cardinality} that
\begin{align}
|X_2| & \geq k2^n-2k+1 - l2^n+2l \notag\\
&= (k-l)2^n-2(k-l)+1. \label{x2 cardinality}
\end{align}
Thus using \eqref{x1 cardinality},
we have $|X_1\cup\{z_1,z_2\}|\geq (l-1)2^n-2(l-1)+3$ which implies
from the induction hypothesis (note that $l\leq k$) that there are $\lfloor 3(l-1)/2\rfloor$ additional integer points
in $\conv(X_1\cup\{z_1,z_2\})\setminus(X_1\cup\{z_1,z_2\})$.
Using \eqref{x2 cardinality}, we have $|X_2\cup\{z_1,z_2\}|\geq (k-l)2^n-2(k-l)+3$
which, from the induction hypothesis (note that $l\geq 1$) implies that there are
$\lfloor 3(k-l)/2\rfloor$ additional integer points in $\conv(X_2\cup\{z_1,z_2\})\setminus(X_2\cup\{z_1,z_2\})$.
The result follows since, counting $z_1, z_2$, we have provided $\lfloor 3(l-1)/2\rfloor+\lfloor 3(k-l)/2\rfloor+2\ge \lfloor 3k/2 \rfloor$ different integer points
in  $\conv(X)\setminus X$. Suppose now that $|X_1|\ge k 2^n-2k+1$. Then the set $X_1\cup\{z_1,z_2\}$  has cardinality at least $k 2^n-2k+3$. Hence, similarly to the previous case, the result follows from
the induction hypothesis.\cqfd

It should be emphasized here that Lemma \ref{main_lemma} is
of independent interest for the theory of lattice points in convex lattice polytopes.  It can be restated as follows.
\begin{corollary}
Let $n\geq 2$ and $k$ be  natural numbers.
Consider a convex lattice polytope  $P$ with the set of vertices $X\subset \Z^n$.
If the cardinality $|X|\geq k2^n-2k+3,$ then there exist at least $\lfloor 3k/2\rfloor$ different lattice
points in $P\setminus X$.
\end{corollary}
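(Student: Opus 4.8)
The plan is to observe that the Corollary is nothing more than a geometric restatement of Lemma \ref{main_lemma}, so the proof reduces to matching up the hypotheses and conclusions of the two statements. First I would recall the standard fact that a polytope equals the convex hull of its vertex set; thus if $X$ is the vertex set of $P$ then $P=\conv(X)$, and consequently the lattice points lying in $P\setminus X$ are exactly the elements of $(\conv(X)\cap\Z^n)\setminus X$. Second, I would invoke the remark made immediately after the Definition of the support hyperplane property, namely that a finite set $X\subset\Z^n$ satisfies the support hyperplane property if and only if every member of $X$ is a vertex of $\conv(X)$. Since by assumption $X$ is precisely the vertex set of $P=\conv(X)$, the set $X$ satisfies the support hyperplane property.

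With these two remarks in hand, the hypotheses $n\geq 2$ and $|X|\geq k2^n-2k+3$ are exactly the hypotheses of Lemma \ref{main_lemma}. Applying that lemma to $X$ produces at least $\lfloor 3k/2\rfloor$ distinct integer points in $\conv(X)\setminus X$, which by the first remark is the same as $P\setminus X$. This is the assertion of the Corollary.

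There is no genuine obstacle here: the Corollary is literally the case of Lemma \ref{main_lemma} in which one starts from a convex lattice polytope $P$ rather than from an abstract finite point set, and it is isolated only to emphasise the purely combinatorial-geometric content of the lemma. The single point that deserves a word of care is the equivalence between ``$X$ is the vertex set of $\conv(X)$'' and ``$X$ satisfies the support hyperplane property'', i.e.\ that the vertices of a lattice polytope are exactly the lattice points admitting a strictly supporting hyperplane; but this is precisely the content of the remark following the Definition and requires no further argument.
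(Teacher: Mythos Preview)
Your proposal is correct and matches the paper's treatment: the paper presents this Corollary explicitly as a restatement of Lemma~\ref{main_lemma} without giving a separate proof, and your argument supplies exactly the obvious dictionary (namely $P=\conv(X)$ and the equivalence between being a vertex set and satisfying the support hyperplane property) needed to see that the two statements coincide.
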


If we go back to the proof of Theorem \ref{k-doignon}, recall that existence of the counterexample system (\ref{system}) implies the existence of
 $m$ integer points $y_1,\ldots, y_m$ satisfying the support hyperplane property, with at most $k$ other integer points in their convex hull $H$
and with
$m\geq \lceil 2(k+1)/3\rceil 2^n-2\lceil 2(k+1)/3\rceil+3$.


Applying Lemma \ref{main_lemma}, we conclude that there must be at least
\begin{equation}
\left\lfloor \frac{3}{2}\left\lceil \frac{2(k+1)}{3} \right\rceil \right\rfloor
\label{complicatedfloor}
\end{equation}
other integer points in $H$. Observe that \eqref{complicatedfloor} is equal to $k+1$ if $k$ is congruent to 0 or 2 modulo 3
and equal to $k+2$ if $k$ is congruent to 1 modulo 3. Hence there are at least $k+1$ other integer points in $H$
which is the desired contradiction.\cqfd

\newcommand{\abs}[1]{\left| #1 \right|}
\renewcommand{\vec}[1]{\mathbf{#1}}

\section{ Lower Bound Constructions for $k=1$}
\label{lowerb}

In page 235 of \cite{schrijver}, an example is given that shows that the bound $2^n$ given by the Doignon-Bell-Scarf
theorem is tight. In this section, we present a construction of a polytope $P$ that shows that
our upper bound for $k=1$ from Theorem \ref{k-doignon} is tight. This example, together with the verification
of its properties, establish Theorem \ref{lowerboundtheorem}.

For notational convenience, given a set of natural numbers $N$,
let $l_N:= \min_{i\in N}\; i$ denote its least element.
We define the polyhedron as follows

\begin{alignat}{2}
P=\{ x\in \mathbb R^n:\qquad \sum_{i=1}^{j-1} \frac{1}{2^{i}} x_i + x_j + \sum_{i=j+1}^{n}
\frac{1}{2^{i-1}} x_i &\leq 1 \qquad&& j=1,...,n, \label{eq1}\\
- \sum_{i=1}^{j-1} \frac{1}{2^{i}} x_i - x_j - \sum_{i=j+1}^{n}
  \frac{1}{2^{i-1}} x_i &\leq 1 \;&& j=1,...,n, \label{eq2}\\
-\frac{1}{\abs{N}} x_{l_N}+\sum_{i \in N, i \neq l_{N}} \frac{1}{\abs{N}} x_i - \sum_{i \not \in N}
\frac{1}{\abs{N}^n} x_i &\leq 1 \; \;&& \forall N \subseteq \{1,2,..,n\}; \; \; \abs{N} \geq 2, \label{eq3}\\
+\frac{1}{\abs{N}} x_{l_N}-\sum_{i \in N, i \neq l_{N}} \frac{1}{\abs{N}} x_i + \sum_{i \not \in N}
\frac{1}{\abs{N}^n} x_i &\leq 1 \; \;&& \forall N \subseteq \{1,2,..,n\}; \; \; \abs{N} \geq 2\quad \}.\label{eq4}
\end{alignat}

%

The rationale behind the construction of the polyhedron $P$ is the following.
First it is constructed in such a way that 0 is the only integer point in its interior.
Both of the inequalities \eqref{eq1}-\eqref{eq2} are tight at a unit vector $\pm e_i$
and exclude some integer points from $\{-1,0,1\}^n$.
All inequalities \eqref{eq3}-\eqref{eq4} are all tight at exactly one of the remaining integer points of $\{-1,0,1\}^n.$

We will prove Theorem \ref{lowerboundtheorem} through Lemmas \ref{lemmageq2} to \ref{allnecessary}.
Lemma \ref{lemmageq2} proves that the only valid integer points of $P$ are in $\{-1,0,1\}^n$. Lemma \ref{conditionfeasibility}
uses Lemma \ref{lemmageq2} to provide a necessary and sufficient condition of feasibility of integer points.
Lastly, Lemma \ref{allnecessary} shows that each inequality defining $P$ is necessary and contains exactly one tight integer point in its relative interior.
Because we have used only rational data the polyhedron is in fact bounded, thus a polytope. This is the case because if unbounded, its recession cone
would contain a rational direction which would force infinitely many points inside.

\begin{lemma}
If $y \in \Z^n$ has at least an index $j$ such that $\abs{y_j} \geq 2$, then $y \not \in P$
\label{lemmageq2}
\end{lemma}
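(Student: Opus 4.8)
The plan is to argue by contradiction. Suppose $y \in P \cap \Z^n$ has $M := \max_i \abs{y_i} \geq 2$. If $n = 1$ then $P = [-1,1]$ and there is nothing to prove, so assume $n \geq 2$. Since inequalities \eqref{eq2} and \eqref{eq4} are the negations of \eqref{eq1} and \eqref{eq3}, the polytope $P$ is symmetric under $x \mapsto -x$; hence, replacing $y$ by $-y$ if necessary, I may assume that some coordinate satisfies $y_p = M$.

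\emph{Step 1: locate a coordinate equal to $-M$.} Write \eqref{eq1} with $j = p$ as $\sum_i c_i x_i \leq 1$, so that $c_p = 1$ and $c_i > 0$ for $i \neq p$; a short geometric-series computation gives $\sum_{i \neq p} c_i = 1 - 2^{-(n-1)} < 1$ (this value does not depend on $p$). Evaluating at $y$ and using $y_p = M$ forces $\sum_{i \neq p} c_i y_i \leq 1 - M$. But if $y_i \geq -(M-1)$ held for every $i \neq p$, then $\sum_{i\neq p} c_i y_i \geq -(M-1)\sum_{i \neq p} c_i > -(M-1) = 1 - M$, a contradiction. Hence some $y_q$ with $q \neq p$ satisfies $y_q < -(M-1)$, and since $y_q$ is an integer with $\abs{y_q} \leq M$ this means $y_q = -M$.

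\emph{Step 2: conclude from the two-element set $N = \{p,q\}$.} Apply \eqref{eq3} and \eqref{eq4} to $N = \{p,q\}$ (legitimate since $\abs{N} = 2$). Multiplying through by $\abs{N} = 2$, the two inequalities become $-x_a + x_b - \sum_{i \notin N} 2^{-(n-1)} x_i \leq 2$ and its negation, where $\{a,b\} = \{p,q\}$ and $a = l_N$. Since $\{y_p, y_q\} = \{M, -M\}$ we have $-y_a + y_b = \pm 2M$, so the pair of inequalities yields $2M \leq 2 + \abs{\sum_{i \notin N} 2^{-(n-1)} y_i} \leq 2 + \frac{n-2}{2^{n-1}} M$. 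As $n - 2 < 2^{n-1}$ for all $n \geq 2$, the last term is strictly less than $M$, giving $2M < 2 + M$, i.e.\ $M < 2$ --- contradicting $M \geq 2$. (For $n = 2$ the sum over $i \notin N$ is empty and one gets $M \leq 1$ at once.)

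The two coefficient-sum computations and the bookkeeping of which of $p,q$ equals $l_N$ are routine. The one genuinely substantive point --- and the reason the estimate in Step 2 closes --- is choosing $p$ and $q$ to be coordinates of \emph{maximal} absolute value and of \emph{opposite} signs, so that the tail $\sum_{i \notin N} 2^{-(n-1)} y_i$, bounded by $\frac{(n-2)M}{2^{n-1}}$, is negligible against $2M$; accordingly, Step 1 --- extracting that opposite-sign extreme coordinate from \eqref{eq1}, using that its off-diagonal coefficients sum to strictly less than $1$ --- is where I expect the main work to lie.
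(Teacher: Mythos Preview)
Your proof is correct, and it takes a genuinely different route from the paper's.

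The paper fixes the \emph{largest index} $k$ with $|y_k|\ge 2$, then adds twice inequality~\eqref{eq1} with $j=k$ to inequality~\eqref{eq2} with $j=1$ to obtain
\[
\Bigl(2-\tfrac{1}{2^{k-1}}\Bigr)y_k+\sum_{i=k+1}^n \tfrac{1}{2^{i-1}}y_i\le 3,
\]
and uses $|y_i|\le 1$ for $i>k$ to bound the tail. This yields $y_k<2$ only for $k\ge 3$; the cases $k=1$ and $k=2$ require separate ad hoc arguments (for $k=2$ the paper resorts to \eqref{eq3} with $N=\{1,2\}$).

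Your argument instead tracks the \emph{maximum absolute value} $M$, not the index. Step~1 uses \eqref{eq1} at the coordinate $p$ with $y_p=M$ together with the fact that the off-diagonal coefficients sum to $1-2^{-(n-1)}<1$ to force a second coordinate $y_q=-M$. Step~2 then applies the pair \eqref{eq3}/\eqref{eq4} with $N=\{p,q\}$, where the main terms produce $2M$ and the tail is at most $\frac{(n-2)M}{2^{n-1}}<M$, yielding $M<2$ uniformly in $n\ge 2$ with no case split. So your approach trades the paper's linear combination of \eqref{eq1}--\eqref{eq2} for a direct use of the two-element subset inequalities, and in return eliminates the index-dependent case analysis entirely. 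Both arguments are short; yours is the more uniform one.
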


\begin{proof}
Consider $y\in P\cap \mathbb Z^n.$
Assume by contradiction that $k$ is the largest index with $|y_k|\geq 2.$
We prove the case $y_k\geq 2$. The negative case is symmetric and
omitted.

Add twice Inequality \eqref{eq1} with $j=k$ to Inequality \eqref{eq2}
with $j=1$ which yields
\begin{align}
\left( 2 - \frac{1}{2^{k-1}} \right) y_k + \sum_{i=k+1}^{n}
\frac{1}{2^{i-1}} y_i \leq 3.
\label{sumoftwoineq}
\end{align}
Since, we have assumed that $|y_i|\leq 1$ for all indices $i\geq k+1$,
we can bound
\begin{align}
-\frac{1}{2^{k-1}} < \sum_{i=k+1}^n \frac{1}{2^{i-1}} y_i < \frac{1}{2^{k-1}}.
\label{boundrest}
\end{align}
Using \eqref{sumoftwoineq} and \eqref{boundrest}, we conclude that
\begin{align}
y_k < \frac{3 \cdot 2^{k-1}+1}{2^k-1}.
\label{contradic}
\end{align}
For $k\geq 3$, this provides a contradiction
since the right-hand-side of \eqref{contradic} can be shown to be smaller than 2.
For $k=1$ or $k=2$, this yields $|y_k|\leq 2.$
Observe though that if $k=1$, then the inequalities \eqref{eq1}
and \eqref{eq2} with $j=1$ together with the fact that, using $|y_i|\leq 1$ for
$i\geq 2$, $|\sum_{i=2}^n \frac{1}{2^{i-1}}y_i|< 1$ yield
$|y_1|\leq 1.$

To finish the proof, there still remains to consider the case $k=2$ i.e. $y_2=2$.
If $y_1$ is nonnegative, $y$ violates
\eqref{eq1} with $j=2$.
If $y_1$ is negative, $y$ violates \eqref{eq3}
with $N=\{1,2\}$.
%
%
\cqfd
\end{proof}

\begin{definition}
Given a point $y \in \Z^n\setminus\{0\}$, let $l(y)$ be the least nonzero index of $y$,
i.e. $y_i=0$ for all $i<l(y).$
\end{definition}

\begin{lemma}
Let $y \in \{-1,0,1\}^n$. Then $y$ is in $P$ if and only if one of the
following is true
\begin{enumerate}[(i)]
\item $y$ is the origin
\item $y_{l(y)} =1$ and $y_i \in \{-1, 0 \}$, for all  $i \geq l(y)+1$
\item $y_{l(y)} = -1$ and $y_i \in \{1, 0 \}$, for all  $i \geq l(y)+1$.
\end{enumerate}
\label{conditionfeasibility}
\end{lemma}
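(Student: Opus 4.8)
The plan is to prove both directions of the biconditional by direct computation, relying crucially on the characterization from Lemma \ref{lemmageq2} that every integer point of $P$ lies in $\{-1,0,1\}^n$ (so no restriction is lost in stating the lemma for $y\in\{-1,0,1\}^n$). The key observation is that the three listed conditions partition the ``good'' sign patterns: starting from the first nonzero coordinate, the sign of that leading coordinate dictates which sign is forbidden in every later coordinate. Conditions (ii) and (iii) are mirror images under $x\mapsto -x$, and the whole system \eqref{eq1}--\eqref{eq4} is symmetric under this map, so it suffices to treat (i) and (ii) and invoke symmetry for (iii).

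For the ``if'' direction, I would first check the origin trivially satisfies all inequalities (every left-hand side is $0\le 1$). Then, assuming $y\in\{-1,0,1\}^n$ with $y_{l(y)}=1$ and $y_i\in\{-1,0\}$ for $i>l(y)$, I would verify each of the four inequality families in turn. For a type-\eqref{eq1} inequality with parameter $j$: if $j<l(y)$ the coefficient of $x_j$ is the ``big'' coefficient $1$ but $y_j=0$, and the remaining terms are a geometric-type sum of things of absolute value $<1$ plus possibly $y_{l(y)}$ entering with a small coefficient — one bounds the sum strictly below $1$. If $j\ge l(y)$, the term $y_j$ enters with coefficient $1$ and $y_j\le 1$, while the other terms are bounded by $\sum_{i\ne j} 2^{-i}<1$ but with the right signs they push the sum below $1$; the tightness case (equality) happens exactly at one vertex of $\{-1,0,1\}^n$. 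The type-\eqref{eq2} inequalities are handled the same way with signs flipped, and one checks the ``forbidden'' patterns are precisely the ones where some later coordinate has the wrong sign. For the type-\eqref{eq3}--\eqref{eq4} inequalities with $|N|\ge 2$, the coefficients are all $\pm 1/|N|$ or $\pm 1/|N|^n$, so the left-hand side is at most $\frac{1}{|N|}\sum_{i\in N}|y_i| + \frac{1}{|N|^n}\sum_{i\notin N}|y_i| \le \frac{|N|}{|N|} + \frac{n-|N|}{|N|^n} $, and a short estimate using $|N|\ge 2$ shows this is $\le 1$ unless the sign pattern on $N$ is exactly the one making \eqref{eq3} or \eqref{eq4} tight — which, one checks, never occurs for the patterns allowed in (ii).

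For the ``only if'' direction I would argue contrapositively: if $y\in\{-1,0,1\}^n$ is not the origin and not of the form (ii) or (iii), then there is a least index $j>l(y)$ at which $y$ has the ``wrong'' sign relative to $y_{l(y)}$. Assuming $y_{l(y)}=1$ (the other case by symmetry), this means $y_j=1$ with $l(y)<j$. I would then exhibit a violated inequality: intuitively, inequality \eqref{eq1} with parameter $j$ — which carries coefficient $1$ on $x_j$ and the dyadic coefficients on the others — is violated because $y_j=1$ already uses up the budget and the earlier coordinate $y_{l(y)}=1$ (with a dyadic coefficient) pushes it over, unless still-later coordinates compensate, in which case one iterates or passes to a type-\eqref{eq3} inequality with $N=\{l(y),j\}$. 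The main obstacle will be bookkeeping here: making the choice of the violated inequality uniform and verifying the strict inequality in the awkward mixed-sign cases, especially sorting out when the small ``$1/|N|^n$'' tail terms matter. I expect this to be the one genuinely delicate part; the ``if'' direction, while long, is a routine sequence of geometric-sum bounds once the case split on $j$ versus $l(y)$ is set up.
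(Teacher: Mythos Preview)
Your overall strategy matches the paper's: verify each direction by checking the four inequality families under the sign constraints (i)--(iii), using the $x\mapsto -x$ symmetry to halve the work. Two points deserve correction, one in each direction.

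For the ``if'' direction and inequalities \eqref{eq3}--\eqref{eq4}, your triangle-inequality bound
\[
\frac{1}{|N|}\sum_{i\in N}|y_i| + \frac{1}{|N|^n}\sum_{i\notin N}|y_i|\ \le\ 1+\frac{n-|N|}{|N|^n}
\]
is strictly greater than $1$ whenever some coordinate outside $N$ is nonzero, so it cannot by itself yield the conclusion ``left-hand side $\le 1$''. The paper instead does a short case split on the position of $l(y)$ relative to $N$: whether $l(y)=l_N$, whether $l(y)\in N\setminus\{l_N\}$, or whether $l(y)\notin N$. In each case the sign pattern of condition (ii) forces enough cancellation among the $\pm\frac{1}{|N|}$ terms that the main sum is at most $\frac{|N|-1}{|N|}$ (or the tail sum vanishes), and then the $\frac{1}{|N|^n}$ tail is absorbed. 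Your sketch gestures at this (``unless the sign pattern\dots''), but the crude bound as written does not do the work; the casework is unavoidable.

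For the ``only if'' direction you overestimate the difficulty. If $y_{l(y)}=1$ and $y_k=1$ for some $k>l(y)$, inequality \eqref{eq1} with $j=k$ is \emph{always} violated, regardless of the remaining coordinates: using $y_i=0$ for $i<l(y)$ and $|y_i|\le 1$ elsewhere, a geometric-series estimate gives
\[
\sum_{i=1}^{k-1}\frac{y_i}{2^i}+y_k+\sum_{i=k+1}^{n}\frac{y_i}{2^{i-1}}
\ \ge\ \frac{1}{2^{k-1}}+1-\Bigl(\frac{1}{2^{k-1}}-\frac{1}{2^{n-1}}\Bigr)=1+\frac{1}{2^{n-1}}>1.
\]
So no iteration and no appeal to \eqref{eq3} is needed; the paper's ``only if'' argument is a single line.
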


\begin{proof}
We first prove that if $y\in\{-1,0,1\}^n$ is feasible then it must satisfy one of the
three conditions.
Assume therefore that $y\in\{-1,0,1\}^n$ is feasible.
The point $y=0$ is trivially feasible (option (i)).
If $y$ is not the origin, there must be
some $y_j \neq 0$. Assume that $y_{l(y)} =1$.  If there is a $k\neq l(y)$ with $y_k =1$,  then $y$
violates Inequality \eqref{eq1} with $j=k$
$$\sum_{i=1}^{k-1} \frac{1}{2^{i}} x_i + x_k + \sum_{i=k+1}^{n}
\frac{1}{2^{i-1}} x_i \leq 1$$
so $y$ satisfies (ii). The case that $y_{l(y)} = -1$ is
symmetric and omitted and leads to option (iii).

Assume $y$ satisfies one of the three conditions, we want to prove that it is feasible.
Obviously, if $y$ is the origin, it is feasible.
Assume that $y$ satisfies (ii), the other case is symmetric and omitted here.
We now prove that all inequalities are satisfied by $y$.

First consider \eqref{eq1}. The term with $x_{l(y)}$ is less or equal to 1
whereas the remaining part of the summation is nonpositive which makes the
left-hand-side of \eqref{eq1} smaller or equal to 1.

Consider now \eqref{eq2}. If $j\leq l(y)$, the term with $x_{l(y)}$ is nonpositive
whereas the sum of the remaining terms is less or equal to 1 which proves that the
inequality is satisfied. If $j\geq l(y)+1$, let us denote the inequality
as $\sum_{i=1}^n \alpha_i x_i \leq 1.$ Observe that $\alpha_{l(y)}=-\frac{1}{2^{l(y)}}$,
$\alpha_j=-1$ and $0>\alpha_i\geq -\frac{1}{2^{l(y)+1}}$ for all $i\geq l(y)+1, i\neq j$.
Therefore $\alpha_{l(y)} y_{l(y)} +\sum_{i\geq l(y), i\neq j} \alpha_i x_i\leq 0$
and $\alpha_jx_j\leq 1$ which makes the
left-hand-side of \eqref{eq2} smaller or equal to 1.

Consider Inequality \eqref{eq3}.
First observe that the second sum of \eqref{eq3} is bounded from above by $1/|N|.$
Concerning the first two terms, we distinguish two cases.
If $l(y)\in N, l_N\neq l(y)$, it implies that $y_{l_N}=0$ using condition (ii),
and bounds $-\frac{1}{\abs{N}} x_{l_N}+\sum_{i \in N, i \neq l_{N}} \frac{1}{\abs{N}} x_i \leq \frac{1}{|N|}$.
Otherwise $-\frac{1}{\abs{N}} x_{l_N}\leq \frac{1}{|N|}$ and $\sum_{i \in N, i \neq l_{N}} \frac{1}{\abs{N}} x_i \leq 0$
which implies that in both cases, the left-hand-side of \eqref{eq3} is bounded from above by 1.

Consider Inequality \eqref{eq4}.
We distinguish two cases. In the first case, we assume that $l(y)=l_N$.
This implies that first term of \eqref{eq4} equals $1/|N|$, the first sum is bounded
from above by $(|N|-1)/|N|$ as it contains $|N|-1$ terms and the last sum is nonpositive
using condition (ii). Therefore the left-hand-side of \eqref{eq4} is bounded from above by 1.
In the second case, we assume that $l(y)\neq l_N$. Therefore the first term
of \eqref{eq4} is bounded from above by 0, the first sum is bounded from above by
$(|N|-1)/|N|$ and the second sum is bounded from above by $1/|N|^n$ and the result
follows.
\cqfd
\end{proof}

\begin{lemma}
Each of the $2(2^n-1)$ inequalities in $P$ is necessary, i.e., the removal of any inequality from $P$ results in the inclusion of at least one additional integer point in the interior of $P$. 
\label{allnecessary}
\end{lemma}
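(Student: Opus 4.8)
The plan is to show each inequality is necessary by pairing it with one integer point of $\{-1,0,1\}^n$ that lies on its bounding hyperplane but satisfies every other inequality of $P$ strictly; removing that inequality then admits this point into the interior. Concretely, if an inequality $I$ is tight at a point $w_I$ while all other inequalities are strict at $w_I$, then $w_I$ lies on a facet of $P$ (so $w_I\notin\inter(P)$), whereas in the polytope obtained by deleting $I$ all surviving constraints are strict at $w_I$, placing $w_I$ in its interior. Since $w_I\neq 0$ and the origin is already interior, the reduced polytope contains at least one additional interior integer point, as claimed.

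First I would fix the pairing between the $2(2^n-1)$ inequalities and the $2(2^n-1)$ nonzero feasible integer points furnished by options (ii)--(iii) of Lemma \ref{conditionfeasibility}: to \eqref{eq1} with index $j$ assign $e_j$, to \eqref{eq2} with index $j$ assign $-e_j$, to \eqref{eq3} with a set $N$ assign the point $w^{(N)}$ defined by $w_{l_N}=-1$, $w_i=1$ for $i\in N\setminus\{l_N\}$, and $w_i=0$ for $i\notin N$, and to \eqref{eq4} with $N$ assign $-w^{(N)}$. A direct substitution shows each assigned point turns its inequality into an equality, and each is feasible by Lemma \ref{conditionfeasibility} (indeed $e_j$ and $-w^{(N)}$ exhaust the points of option (ii), while $-e_j$ and $w^{(N)}$ exhaust those of option (iii)).

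The substance of the proof is verifying strictness of the remaining inequalities at each assigned point. Because the reflection $x\mapsto -x$ interchanges \eqref{eq1} with \eqref{eq2} and \eqref{eq3} with \eqref{eq4}, carrying each tight point to the tight point of the image inequality, it suffices to test $e_j$ and $w^{(N)}$ against all four families. For $e_j$ this is immediate: the value of any inequality at $e_j$ equals its coefficient of $x_j$, and this is strictly less than $1$ for every inequality other than \eqref{eq1} with index $j$, whose value is exactly $1$. For $w^{(N)}$ tested against \eqref{eq1}--\eqref{eq2}, the left-hand side is a geometric sum whose positive part is strictly dominated by the negative contribution $-1/2^{l_N}$, so the value stays strictly below $1$; tested against \eqref{eq3}--\eqref{eq4} with a set $N'\neq N$, one must count how the $\pm1$ coordinates of $w^{(N)}$ distribute inside and outside $N'$, using the small correction terms $1/\abs{N'}^n$ to separate the value strictly from $1$.

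I expect the comparison of $w^{(N)}$ with \eqref{eq3}/\eqref{eq4} for $N'\neq N$ to be the main obstacle, since the crude bound only gives $\le 1+n/\abs{N'}^n$, which can exceed $1$. Strictness emerges only after splitting into cases according to whether $l_{N'}=l_N$ and how $N'$ meets $N$. The decisive point is that the contribution of the first two terms is at most $(1+a)/\abs{N'}\le 1$ with $a=\abs{(N\cap N')\setminus\{l_N\}}$, and equality forces $l_{N'}=l_N$ together with $N'\setminus\{l_N\}\subseteq N$; then $N'\neq N$ yields $N'\subsetneq N$ and hence a strictly negative correction $-\,\abs{N\setminus N'}/\abs{N'}^n<0$, restoring strict inequality. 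Once every case is settled, the implication of the first paragraph shows that deleting any one inequality forces its paired point into the interior, completing the proof. \cqfd
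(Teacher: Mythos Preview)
Your approach is correct and is essentially the same as the paper's: both establish a bijection between the $2(2^n-1)$ inequalities and the $2(2^n-1)$ nonzero feasible integer points of Lemma~\ref{conditionfeasibility}, with each point tight at exactly its own inequality, so that deleting an inequality lets its paired point slip into the interior.

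The only notable difference is organizational. You fix the point $w_I$ and then check it against every other inequality, which forces the pairwise case analysis of $w^{(N)}$ against \eqref{eq3}/\eqref{eq4} for $N'\neq N$ that you flag as the main obstacle. The paper instead fixes an inequality and asks which feasible points can make it tight: for \eqref{eq3} it observes that the perturbation term $-\sum_{i\notin N}\frac{1}{|N|^n}x_i$ has absolute value strictly less than $\tfrac{1}{|N|}$, while the main part $-\tfrac{1}{|N|}x_{l_N}+\sum_{i\in N,\,i\neq l_N}\tfrac{1}{|N|}x_i$ takes values in $\tfrac{1}{|N|}\Z$ with maximum $1$; hence tightness forces the main part to equal $1$ and the perturbation to vanish, pinning down $w^{(N)}$ uniquely. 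Since the resulting tight points are pairwise distinct, each is automatically strict at every other inequality. This granularity argument buys exactly the strictness you need without splitting into cases on how $N$ and $N'$ overlap, so it is worth adopting in place of your direct comparison.
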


\begin{proof}
We will show the lemma by proving that each facet of
$P$ contains exactly one integer feasible point in its relative interior.

Consider an inequality of type \eqref{eq1}. Observe that for any point satisfying
conditions (ii) or (iii) of Lemma \ref{conditionfeasibility},
$\sum_{i=1}^{j-1} \frac{1}{2^i} x_i + \sum_{i=j+1}^{n}
\frac{1}{2^{i-1}} x_i <1$. Therefore to make \eqref{eq1} tight
we need $x_j=1$ which implies  $\sum_{i=1}^{j-1} \frac{1}{2^i} x_i + \sum_{i=j+1}^{n}
\frac{1}{2^{i-1}} x_i =0$. Since the coefficients are in a geometric
progression, this in turn implies $x_i=0$ for all $i\neq j$.
We have therefore proven that the unit vectors are tight for all inequalities
of type \eqref{eq1}. By symmetry, for each inequality of type \eqref{eq2},
only $-e_j$ (where $e_j$ denotes the $j^{th}$ unit vector) is tight,
integer  and valid.

Consider an inequality of type \eqref{eq3}. Observe that for any point
satisfying conditions  (ii) or (iii) of Lemma \ref{conditionfeasibility},
 $- \sum_{i \not \in N}
\frac{1}{\abs{N}^n} x_i < \frac{1}{|N|}.$
Therefore to make \eqref{eq3} tight, we need
$-\frac{1}{\abs{N}} x_{l_N}+\sum_{i \in N, i \neq l_{N}} \frac{1}{\abs{N}} x_i = 1$
which implies $x_{l_N}=-1$ and $x_i=1$ for all $i\in N\setminus l_N$
and $x_i=0$ for $i\not\in N.$
Symmetrically , for each inequality of type \eqref{eq4}, only
$x_{l_N}=1$, $x_i=-1$ for all $i\in N\setminus l_N$
and $x_i=0$ for $i\not\in N$  is tight, integer  and valid.

By observing that all points that were shown to be tight
for the facet-defining inequalities of $P$ are all different, the result follows.

%
Lastly, the fact that there are $2(2^n-1)$ planes follows from the fact
that they are in bijection with double the number of nonempty subsets
of $\{1,\ldots, n\}$.\cqfd
\end{proof}

In this section, we have dealt with the case $k=1$ and proven that the upper bound given in Theorem
\ref{k-doignon} is tight. Since the upper bound for $k=2$ matches that for $k=1$, it is natural to conjecture
that the bound is tight for $k=2$ as well. We know that for
$c(2,k)$ the bound is not tight for $k\geq 3$. We also believe that it is not tight for $n, k\ge 3$ and can further be improved.

\section{Consequences and variations of Theorem \ref{k-doignon}}  \label{apps}

In this section, we discuss variations and consequences of Theorem \ref{k-doignon}.

Let us begin by remarking that while replacing the $=$ by $\geq k$ in the statement of Theorem \ref{k-doignon} gives a very easy-to-prove result, nevertheless one can state a more surprising corollary of Theorem \ref{k-doignon} that involves estimations of the number of integer points and  resembles more the quantitative Helly theorem of \cite{baranykatchalskipach,baranykatchalskipach2}:

\begin{corollary} There exists a universal constant $c(n,k)$ such that, given any system of inequalities $\{x \in \R^n : Ax \leq b\}$, if  every subset of the constraints of
cardinality $c(n,k)$ has more than $k$ integer solutions, then the entire system of inequalities must have more than $k$ integer solutions.
\end{corollary}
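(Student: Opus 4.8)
The plan is to derive this corollary directly from Theorem \ref{k-doignon} by a standard contrapositive argument, handling separately the bounded and unbounded cases. First I would set $c(n,k)$ to be exactly the constant of Theorem \ref{k-doignon} (so that the same symbol is legitimately reused), and argue by contraposition: assume the full system $\{x \in \R^n : Ax \le b\}$ has at most $k$ integer solutions, and show that then some subset of at most $c(n,k)$ constraints already has at most $k$ integer solutions, contradicting the hypothesis that every such subset has more than $k$.

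The bounded case is the one Theorem \ref{k-doignon} addresses directly, but there is a small gap: Theorem \ref{k-doignon} is stated for polyhedra with \emph{exactly} $k$ integer points, whereas here the full system could have $j < k$ integer points, or could even be a polytope that is not full-dimensional. The fix is routine: if the polyhedron $P = \{x : Ax \le b\}$ is bounded and contains exactly $j \le k$ integer points, apply Theorem \ref{k-doignon} with the value $j$ in place of $k$ to obtain a subset $S$ of the rows with $|S| \le c(n,j) \le c(n,k)$ (monotonicity of the bound $\lceil 2(j+1)/3\rceil 2^n - 2\lceil 2(j+1)/3\rceil + 2$ in $j$) such that $\{x : A_S x \le b_S\}$ has the same $j \le k$ integer points; this subset then witnesses the contrapositive.

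The main obstacle is the unbounded case, since Theorem \ref{k-doignon} is about polytopes. Here I would reduce to the bounded case by a standard truncation/compactness trick: if $P$ is unbounded but $P \cap \Z^n$ is finite (which it need not be, but if it is infinite the full system has more than $k$ solutions and we are done, so assume finite, in fact of size $\le k$), then enclose all these integer points in a large cube $Q$ and observe that $P \cap Q$ is a polytope with the same integer points; add the $2n$ facet inequalities of $Q$ to the system. A cleaner route, avoiding extra constraints, is the one already used inside the proof of Theorem \ref{k-doignon}: the existence of a minimal counterexample forces, for each ``necessary'' constraint, an integer point violating only that constraint, and these points together with the $\le k$ feasible points lie in a common cube, so the entire argument takes place inside a bounded region from the outset. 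I would invoke that the proof of Theorem \ref{k-doignon} in fact establishes the irredundancy statement in this local form, so no genuinely new work is needed for unboundedness. If $P \cap \Z^n$ is infinite then trivially every subsystem also contains these infinitely many points (monotonicity of feasible sets under deletion of constraints), so more than $k$ solutions, and the corollary holds vacuously in that direction.

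Finally I would remark that the constant obtained is precisely the $c(n,k)$ of Theorem \ref{k-doignon}, and in particular $c(n,0) = 2^n$ recovers the original Doignon--Bell--Scarf statement (every $2^n$ constraints feasible $\Rightarrow$ the whole system feasible), so the corollary is a genuine common generalization; the extra content over the trivial ``$\ge k$'' version is that the witnessing subsystem has \emph{exactly} the same integer points, not merely at least $k$ of them. The write-up should be short — essentially a paragraph — since every ingredient is either Theorem \ref{k-doignon} itself or the elementary monotonicity facts just noted.
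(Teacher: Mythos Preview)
Your contrapositive derivation is correct and is exactly what the paper has in mind: the paper states this corollary without proof, presenting it as an immediate consequence of Theorem~\ref{k-doignon}, and the argument you outline is the natural (and essentially only) way to extract it.

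Two minor remarks that would let you shorten the write-up. First, your concern about unbounded polyhedra is largely unnecessary: Theorem~\ref{k-doignon} as stated and proved in the paper does not require $P$ to be bounded, only that it contain exactly $k$ integer points; the cube $Q$ appearing in the proof is chosen just large enough to contain the finitely many relevant lattice points $x_1,\dots,x_m$ and the $k$ feasible points, so the argument already localizes to a bounded region automatically. Hence once you assume (by contraposition) that the full system has $j\le k$ integer solutions, Theorem~\ref{k-doignon} applies directly with $j$ in place of $k$. Second, for the needed inequality $c(n,j)\le c(n,k)$ when $j\le k$, it is enough, as you observe, to take for the corollary the explicit upper bound $\lceil 2(k+1)/3\rceil 2^n-2\lceil 2(k+1)/3\rceil+2$, which is visibly nondecreasing in $k$ since $2^n-2\ge 0$. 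With these simplifications the proof really is one short paragraph.
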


One natural question is how to rephrase Theorem \ref{k-doignon} for convex sets rather than systems of linear inequalities

\begin{lemma}\label{finite}
Given an infinite collection of convex sets $( X_i )_{i \in
\Lambda}$, $X_i \subseteq \R^n$ such that there is some index $\tilde{r}$
with $X_{\tilde{r}}$  bounded and $\bigcap_{i \in \Lambda} X_i$ contains exactly
$k$ integer points, there is a finite subcollection of these convex sets
whose intersection contains these $k$ integer points and no others.
\end{lemma}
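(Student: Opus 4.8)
The plan is to reduce the infinite collection of convex sets to a finite system of linear inequalities and then invoke Theorem \ref{k-doignon}. First I would use the hypothesis that some $X_{\tilde r}$ is bounded: since $\bigcap_{i\in\Lambda}X_i\subseteq X_{\tilde r}$, the intersection is contained in a ball, hence the set $G$ of integer points in $X_{\tilde r}$ is finite. Among these finitely many integer points, exactly $k$ lie in $\bigcap_{i\in\Lambda}X_i$; call them $p_1,\dots,p_k$, and let $q_1,\dots,q_t$ be the remaining (finitely many) points of $G$ that are \emph{not} in the intersection. For each $q_s$ there is an index $i(s)\in\Lambda$ with $q_s\notin X_{i(s)}$. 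Since $X_{i(s)}$ is convex and $q_s$ lies outside it, there is a closed half-space $H_s\supseteq X_{i(s)}$ with $q_s\notin H_s$ (separation of a point from a closed convex set; if $X_{i(s)}$ is not closed one separates from its closure, which still contains all the $p_j$).

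Next I would assemble the finite subcollection $\{X_{\tilde r}\}\cup\{X_{i(s)}: s=1,\dots,t\}$. Its intersection $Y:=X_{\tilde r}\cap\bigcap_{s=1}^t X_{i(s)}$ is bounded, contains $p_1,\dots,p_k$ (each $p_j$ lies in every $X_i$), and contains no $q_s$ (since $q_s\notin X_{i(s)}$), and contains no integer point outside $G$ (since $Y\subseteq X_{\tilde r}$). Hence $Y\cap\Z^n=\{p_1,\dots,p_k\}$, exactly the $k$ desired integer points. This already proves the lemma as stated; Theorem \ref{k-doignon} is not strictly needed for the \emph{finiteness} claim, but if one additionally wants the half-space/inequality formulation with a bound on the cardinality, I would take the half-spaces $H_1,\dots,H_t$ together with a box of $2n$ inequalities enclosing $X_{\tilde r}$, note their intersection still cuts out exactly $\{p_1,\dots,p_k\}$ in $\Z^n$, and apply Theorem \ref{k-doignon} to this finite linear system to extract a subsystem of at most $c(n,k)$ inequalities with the same $k$ integer points; each retained half-space $H_s$ can be traded back for the corresponding convex set $X_{i(s)}$.

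The main obstacle is the separation step and handling non-closed convex sets: a point $q_s$ outside a convex set $X_{i(s)}$ need not be separated by a \emph{closed} half-space disjoint from $q_s$ unless $X_{i(s)}$ is closed (e.g. $q_s$ could lie on the boundary of the closure). The clean fix is to separate $q_s$ from $\overline{X_{i(s)}}$ when $q_s\notin\overline{X_{i(s)}}$, and when $q_s\in\overline{X_{i(s)}}\setminus X_{i(s)}$ one can still find a closed half-space $H_s\supseteq X_{i(s)}$ with $q_s$ on the bounding hyperplane but not in the relative interior — and then perturb the right-hand side slightly inward so that $q_s$ is strictly excluded while all $p_j$ (which lie in the open set $\bigcap_i X_i$ intersected with relevant interiors, or at least remain inside after a small perturbation if we are careful) stay feasible; since there are only finitely many $p_j$ and $q_s$ and they are distinct lattice points, a sufficiently small perturbation separates $q_s$ from all of $p_1,\dots,p_k$ simultaneously. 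Making this perturbation argument precise is the only delicate point; everything else is bookkeeping.
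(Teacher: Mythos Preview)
Your core argument---take $X_{\tilde r}$ together with one $X_{i(s)}$ for each of the finitely many integer points of $X_{\tilde r}$ not in the full intersection---is correct and is exactly the paper's proof. Note, however, that everything you write about separating half-spaces, perturbation, and non-closed convex sets is unnecessary for this lemma and the ``main obstacle'' you identify is illusory: you never need any $H_s$, since the finite subcollection $\{X_{\tilde r}\}\cup\{X_{i(s)}\}$ already does the job set-theoretically, regardless of whether the $X_i$ are closed. The half-space reduction and the invocation of Theorem~\ref{k-doignon} belong to the \emph{next} result (Theorem~\ref{convex}), not here.
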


\begin{proof}
Consider the set  $\bigcap_{i \in \Lambda} X_i.$
Since this is a subset of $X_{\tilde{r}}$, the intersection is properly contained in a hypercube $B$ with integer vertices. Consider the
set
$$S = \{x \in B \setminus \bigcap_{i \in \Lambda} X_i: x \in \Z^n \}.$$
Since $B$ bounds this set, $S$ is finite. Note that $S$ is non-empty otherwise $B=\bigcap_{i \in \Lambda} X_i$, which means that the (finitely many) $X_i$ that contain the facets
of $B$ form the desired finite subcollection. For each $x \in S$,
define $P_x = \{X_i : x \not \in X_i \}$. Using the axiom of choice (if
$P_x$ is uncountable, else enumerate and pick the $X_i$ of least index),
pick an element $X_{i(x)}$ in $P_x$ for each $x \in S$.

Let $I$ be the set of indices $I:=\{ i(x) : x \in S \} \cup \{\tilde{r}\}$.  Note that since $S$ is finite, then $I$ is finite.

Claim: $\bigcap_{{i} \in I} X_{i}$ contains the same integer
points as $\bigcap_{i \in \Lambda} X_i$ and no others.

Obviously   $\bigcap_{i \in \Lambda} X_i \subseteq \bigcap_{i \in I} X_{i}$
since it is the intersection of more sets. For the reverse
containment, assume for contradiction that there is an integer point $y \in
\bigcap_{i \in I} X_{i}$ such that $y \not \in
\bigcap_{i \in \Lambda} X_i$. Obviously $y \in B$, since $\tilde{r} \in I$.
By construction, $I$ contains at least one index $i(x)$ such that $X_i(x)$  excludes $y$, so $y
\not \in \bigcap_{i \in I} X_{i}$. This is a contradiction. Thus

$$\left(\bigcap_{i \in I}  X_{i}\right)\cap \Z^n \subseteq \bigcap_{i \in \Lambda} X_i,$$
so $( X_{i})_{i \in I}  $ is the desired finite subcollection of our original convex sets. \cqfd
\end{proof}

\begin{theorem}\label{convex} Let $n,k$ be positive integers. There is a universal constant $c(n,k)$, depending
only on the dimension $n$ and $k$, such that, for any collection $(X_i)_{i \in \Lambda}$ of closed convex sets in $\R^n$,
where at least one of the sets is bounded and exactly $k$ integer points are in $\bigcap_{i \in \Lambda} X_i$;  then there is a
subcollection of size less than or equal to $c(n, k)$ with exactly the same $k$ integer points in their intersection.
\end{theorem}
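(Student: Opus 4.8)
The plan is to reduce to the already–established inequality version of Theorem~\ref{k-doignon}, the bridge being the elementary fact that a closed convex set other than $\R^n$ equals the intersection of the closed half-spaces containing it. First I would apply Lemma~\ref{finite} to pass to a \emph{finite} subcollection $X_1,\dots,X_N$ of the $X_i$ whose intersection $K:=\bigcap_i X_i$ still contains exactly $k$ integer points and still has one bounded member, say $X_{\tilde r}$; any $X_i$ equal to $\R^n$ may be dropped without changing $K$. Since $X_{\tilde r}$ is a nonempty compact convex set, finitely many of the half-spaces containing it already cut out a bounded polyhedron: for instance the $2n$ half-spaces $\{x:x_\ell\le\max_{y\in X_{\tilde r}}y_\ell\}$ and $\{x:x_\ell\ge\min_{y\in X_{\tilde r}}y_\ell\}$, $\ell=1,\dots,n$, intersect in a box $B$ with $X_{\tilde r}\subseteq B$, hence $K\subseteq B$. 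Then $B\cap\Z^n$ is finite, so $S:=(B\cap\Z^n)\setminus K$ is finite, and for each $x\in S$, since $x\notin K=\bigcap_i X_i$, there is an index $i(x)$ with $x\notin X_{i(x)}$; by the separation theorem for the closed convex set $X_{i(x)}$ and the exterior point $x$ there is a closed half-space $H_x$ with $X_{i(x)}\subseteq H_x$ and $x\notin H_x$.

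The next step is to assemble the bounded polyhedron $P':=B\cap\bigcap_{x\in S}H_x$ and to verify that it is a legitimate input for Theorem~\ref{k-doignon} all of whose constraints are attributable to members of the collection. Indeed $P'$ is bounded (it lies inside $B$); it contains $K$, because $B$ and every $H_x$ contain some $X_i\supseteq K$; and $P'\cap\Z^n=K\cap\Z^n$, since an integer point of $B$ outside $K$ belongs to $S$ and is therefore excluded by the corresponding $H_x$, while integer points outside $B$ are excluded by $B$. Thus $P'$ is a bounded polyhedron with exactly the same $k$ integer points as $K$, and each of its defining half-spaces — a facet of $B$ (which contains $X_{\tilde r}$) or some $H_x$ (which contains $X_{i(x)}$) — contains at least one set of the collection.

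Finally I would invoke Theorem~\ref{k-doignon} for $P'$ to obtain a subsystem of at most $c(n,k)$ of these half-spaces whose intersection still has exactly those $k$ integer points. Choosing, for each selected half-space $H$, a set $X_{\phi(H)}$ of the collection with $X_{\phi(H)}\subseteq H$, the subcollection $\{X_{\phi(H)}\}$ has cardinality at most $c(n,k)$, its intersection is contained in the intersection of the selected half-spaces and hence has at most $k$ integer points, and it contains $K$ and hence has at least $k$; so it has exactly the same $k$, which is what we want. I expect the only delicate point to be exactly what the preliminary construction is designed to handle: the half-spaces used to enforce boundedness must themselves be attributable to the collection. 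Bounding $K$ by an arbitrary integer box would introduce up to $2n$ facets containing no set of the collection and would only give the weaker estimate $c(n,k)+2n$; building the bounding box out of supporting half-spaces of the already-present bounded set $X_{\tilde r}$ is precisely what keeps the bound at $c(n,k)$.
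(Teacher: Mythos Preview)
Your proposal is correct and follows essentially the same route as the paper: reduce to a finite subcollection via Lemma~\ref{finite}, build a bounded polyhedron from a box containing the bounded set $X_{\tilde r}$ together with separating half-spaces for the finitely many excluded integer points, apply Theorem~\ref{k-doignon}, and then map each selected half-space back to a member $X_i$ it contains. The only cosmetic difference is that the paper uses an arbitrary integer hypercube $B\supseteq X_1$ rather than your tight coordinate box of $X_{\tilde r}$; since any box containing $X_{\tilde r}$ has facet half-spaces that contain $X_{\tilde r}$, both choices make the bounding constraints attributable to the collection and yield the same bound $c(n,k)$.
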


\begin{proof}

By Lemma \ref{finite}, it suffices to consider a finite subcollection ${\cal A} \subset \Lambda$ of indices, where at least one
of the $X_i, i \in {\cal A}$ is bounded, say without loss of generality $X_1$. Since $X_1$ is bounded,
there is a hypercube $B$ with integer vertices that bounds it. Let $U$ be the set of
$2n$ hyperplanes that determine $B$. For each of the integer
points $y$ in $B \setminus \bigcap_{i \in {\cal A}} X_i$, there is an
$X_{\alpha}$ such that $y \not \in X_{\alpha}$. It follows that there
is a supporting hyperplane $v_y$ of $X_{\alpha}$ which is violated by
$y$.
Let
$$ S = \bigcup_{y \in (B \cap \Z^n) \setminus \bigcap_{i \in {\cal A}} X_i} \{v_y \}
\cup U.$$
Note that $S$ is finite. Let $P$ be the polytope determined by the
constraints in $S$. Then $P$  contains the $k$ integer points in
$\bigcap_{i \in {\cal A}} X_i$, because the hyperplanes were selected from
 $B$ and the $v_y$s.

For the reverse containment, assume for a contradiction that there is an
integer point $y$ that is in $P$, but not
in $\bigcap_{i \in {\cal A}} X_i$. Obviously, $y \in B$. But by construction,
$S$ contains a constraint which violates $y$, so that $y$ is not in
the polytope determined by $S$.

By Theorem \ref{k-doignon}, $P$ has at most $c(n, k)$
necessary hyperplanes, i.e. there is a subset of the constraints in $S$ of
size no more than $c(n, k)$ that yield a polyhedron $P_{c(n,k)}$  which contains the
$k$ original integer points and no others. By construction, the hyperplanes in
$P_{c(n,k)}$ have a
well-defined identification with the $X_i$'s. Namely, if a hyperplane
$v \in S$ comes from $B$, then it is identified with $X_1$. Otherwise,
$v$ is identified with an arbitrary chosen $X_{\alpha}$ from a finite number of convex set used in the construction of $v$. Call this identification $\phi$. The image of the
hyperplanes in $P_{c(n, k)}$ under $\phi$ is a subcollection $(X_i)_{i \in \Phi}$
which contains the $k$ original integer points and no others, by construction.\cqfd
\end{proof}


 As a final application of Theorem \ref{k-doignon} we are interested in using it within the theory of violator spaces, and then a Clarkson-type randomized algorithm,
to compute the best, 2nd best,$\dots$, $l$-th best solutions to a given integer linear optimization problem.  We call these points the {\em $l$-best solutions of an ILP}.
The literature on this problem is quite extensive and established (see e.g., \cite{hamacherqueyranne,lawler}). As we will see
the resulting algorithm will be linear on the number of constraints, when $l$ and  the dimension are fixed constants.

Let us remember the basics of this theory.
In the years since Clarkson wrote his well-known paper \cite{clarkson}, several researchers observed that his algorithm
works for optimization problems that fit certain abstract structures. This applies for LP-type problems e.g., in \cite{amenta,sharirwelzl}. More
recently G\"artner et al \cite{gaertneretal} proved that in fact  Clarkson's algorithm works in greatest generality for the so called \emph{violator spaces}.
Essentially, a violator space is an optimization problem in which we have a finite set of constraints or elements $H$ and a function that given any subset of constraints $G$, indicates which other constraints in $H \setminus G$ \emph{violate} the feasible solutions to $G$. If one has a violator space structure,
the optimal solution of the problem can be computed via a randomized method whose  running time is {\em linear} in the number of constraints defining the problem,
and {\em subexponential} in the dimension  of the problem. Thus when $l$ and the dimension of the problem are constant,  it gives
a polynomial-time method. We recall here the necessary definitions and properties of violator spaces and see that they fit the problem at hand.

\begin{definition}
A \emph{violator space} is a pair $(H, V)$ where $H$ is a finite set
and $V$ is a mapping $2^H \to 2^H$ such that
the following two conditions hold.
\begin{itemize}
\item
\textbf{Consistency}: $G \cap V(G) = \emptyset$ for all $G \subseteq
H$,
\item
\textbf{Locality:} For all $F \subseteq G \subseteq H$, where $G \cap
V(F) = \emptyset$, we have $V(G) = V(F)$.
\end{itemize}
\label{violatorspace}
\end{definition}

In our case, $H$ is the set of linear inequality constraints of an integer linear program
\begin{math}
IP(H)=\text{min }\{ c^{T} x \mid
 a^{(i)}x \leq b^{(i)}, i\in H,
 x \in \Z^n\}
\end{math}.
For every $G\subseteq H$, we consider the IP defined using a subset of the
constraints \begin{math}
IP(G)=\text{min }\{ c^{T} x \mid
 a^{(i)}x \leq b^{(i)}, i\in G,
 x \in \Z^n\}
\end{math}.

We define the violator set $V(G)$ as the set of inequalities $h\in H$ such that
the $l$ best solutions of $IP(G)$ are not identically equal to the
$l$ best solutions of $IP(G\cup\{h\}).$
Note that we need to have a total ordering on the possible feasible
solutions of $IP(G)$. We therefore provide each integer program with
a universal tie-breaking rule, for instance, using lexicographic ordering.
Assume that $IP(G)$ has at least $l$ different feasible solutions.
Define $\vec{x}_l(G)$ to be the $l$-tuple
consisting of the $l$ best integer points in $IP(G)$ under this
ordering. For our application we say that a constraint $h \in H$ is in $V(G)$ if
$\vec{x}_l(G) \neq \vec{x}_l(G \cup \{h\})$.
If we assume that $IP(G)$ has less than $l$ feasible solutions, we define
$V(G)$ as being the empty set.

We call the pair $(H,V)$ defined above as an $l$-th best IP.
\begin{lemma}
The $l$-th best IP  is a violator space.
\end{lemma}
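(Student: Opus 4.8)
The plan is to verify the two defining axioms of a violator space—\textbf{Consistency} and \textbf{Locality}—directly from the definition of $V(G)$ for the $l$-th best IP. Throughout I will keep in mind the two regimes separated in the definition: either $IP(G)$ has at least $l$ feasible integer solutions (so $\vec{x}_l(G)$ is a well-defined $l$-tuple under the fixed lexicographic tie-breaking rule) or it has fewer than $l$ feasible solutions (in which case $V(G)=\emptyset$ by fiat). A useful preliminary observation, used repeatedly, is \emph{monotonicity}: if $F\subseteq G$ then the feasible region of $IP(G)$ is contained in that of $IP(F)$, so the sorted list of feasible points of $IP(G)$ is a sublist of that of $IP(F)$; in particular the number of feasible solutions is nonincreasing in the constraint set, and if $\vec{x}_l(F)$ is defined and all $l$ of its entries happen to remain feasible for $IP(G)$, then $\vec{x}_l(G)=\vec{x}_l(F)$.

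For \textbf{Consistency} I must show $G\cap V(G)=\emptyset$. Take $h\in G$; I want $h\notin V(G)$. If $IP(G)$ has fewer than $l$ feasible solutions, $V(G)=\emptyset$ and there is nothing to prove. Otherwise, since $h\in G$ we have $G\cup\{h\}=G$, hence trivially $\vec{x}_l(G\cup\{h\})=\vec{x}_l(G)$, so $h\notin V(G)$. This case is essentially immediate.

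For \textbf{Locality} I assume $F\subseteq G\subseteq H$ with $G\cap V(F)=\emptyset$, and must conclude $V(G)=V(F)$. First I handle the feasibility regime: I claim $IP(F)$ has at least $l$ feasible solutions if and only if $IP(G)$ does. If $IP(G)$ has $\ge l$ feasible solutions, so does $IP(F)$ by monotonicity. Conversely, suppose $IP(F)$ has $\ge l$ feasible solutions, so $\vec{x}_l(F)$ is defined; the hypothesis $G\cap V(F)=\emptyset$ means that adding any single constraint of $G$ to $F$ does not change the $l$-best tuple, i.e. $\vec{x}_l(F\cup\{h\})=\vec{x}_l(F)$ for every $h\in G$, so in particular each of the $l$ points of $\vec{x}_l(F)$ satisfies every constraint $h\in G$; hence all $l$ of them are feasible for $IP(G)$, giving $IP(G)$ at least $l$ feasible solutions and, by the monotonicity observation, $\vec{x}_l(G)=\vec{x}_l(F)$. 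If both IPs have fewer than $l$ feasible solutions, then $V(G)=V(F)=\emptyset$ and we are done; so assume henceforth both are in the "$\ge l$" regime with $\vec{x}_l(G)=\vec{x}_l(F)=:\vec{x}$. Now take any $h\in H$ and compare: since $F\subseteq G$, monotonicity gives $\vec{x}_l(G\cup\{h\})=\vec{x}_l(F\cup\{h\})$ whenever the relevant tuples are defined and equal to $\vec x$ on the common feasible part—more precisely, the feasible points of $IP(F\cup\{h\})$ form a sublist of those of $IP(F)$, the feasible points of $IP(G\cup\{h\})$ form a sublist of those of $IP(G)$, and since the first $l$ feasible points of $IP(F)$ and of $IP(G)$ coincide (namely $\vec x$), imposing the extra constraint $h$ deletes exactly the same members among those, so $\vec{x}_l(G\cup\{h\})$ is defined iff $\vec{x}_l(F\cup\{h\})$ is, and when defined they are equal. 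Therefore $h\in V(G)\iff \vec{x}_l(G\cup\{h\})\ne\vec x\iff \vec{x}_l(F\cup\{h\})\ne\vec x\iff h\in V(F)$, proving $V(G)=V(F)$.

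The step I expect to be the main obstacle, and where care is needed, is the bookkeeping in the Locality argument that "imposing $h$ deletes the same members of the $l$-best prefix": one must argue cleanly that because the sorted feasible lists of $IP(F)$ and $IP(G)$ agree on their first $l$ entries, and adding $h$ simply removes all points violating $h$, the resulting $l$-best tuples either both become undefined (if fewer than $l$ of the original feasible points survive) or both equal the same tuple. This uses that the global ordering (objective value plus the universal lexicographic tie-break) is fixed once and for all and does not depend on the constraint set, so "the $l$ best surviving points" is unambiguous. Once that is phrased correctly, the equivalence $h\in V(G)\iff h\in V(F)$ drops out formally, and the lemma follows. \cqfd
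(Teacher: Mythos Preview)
Your approach mirrors the paper's: verify Consistency directly (trivial), then for Locality first establish $\vec{x}_l(F)=\vec{x}_l(G)$ from the hypothesis $G\cap V(F)=\emptyset$, and deduce $V(F)=V(G)$. The paper does exactly this, phrasing the last step via the characterization ``$h\in V(F)$ iff $h$ cuts at least one entry of $\vec{x}_l(F)$.''

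There is, however, a genuine gap in your Locality argument. You assert that after establishing $\vec{x}_l(F)=\vec{x}_l(G)=:\vec{x}$, one has ``$\vec{x}_l(G\cup\{h\})$ is defined iff $\vec{x}_l(F\cup\{h\})$ is, and when defined they are equal,'' and you repeat this in the final paragraph (``either both become undefined \dots\ or both equal the same tuple''). This is false. The sorted feasible lists of $IP(F)$ and $IP(G)$ agree only on their first $l$ entries; beyond that they may differ. If $h$ removes, say, the first entry of $\vec{x}$, then the new $l$-th best point must be drawn from position $l+1$ or later in the original list, and that point need not be the same for $F$ and $G$; indeed $\vec{x}_l(F\cup\{h\})$ can be defined while $\vec{x}_l(G\cup\{h\})$ is not.

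Fortunately the equivalence you actually need, $\vec{x}_l(G\cup\{h\})\ne\vec{x}\iff \vec{x}_l(F\cup\{h\})\ne\vec{x}$, is true, but for a simpler reason you do not invoke: both sides are equivalent to ``$h$ cuts at least one entry of $\vec{x}$.'' If $h$ cuts no entry of $\vec{x}$, monotonicity gives $\vec{x}_l(F\cup\{h\})=\vec{x}_l(G\cup\{h\})=\vec{x}$; if $h$ cuts some entry $x_i$, then $x_i$ is infeasible for both $IP(F\cup\{h\})$ and $IP(G\cup\{h\})$, so neither tuple can equal $\vec{x}$ (regardless of whether it is defined). Replace your over-strong intermediate claim with this two-case argument and the proof is complete; this is essentially how the paper argues.
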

\begin{proof}
We need to check that the two conditions presented in Definition \ref{violatorspace}
are satisfied. The consistency condition is clearly satisfied.

Assume that $F \subseteq G \subseteq H$ and $G \cap V(F) = \emptyset$.
To show locality we want to show that $V(F) = V(G)$.
We first consider the case where $IP(F)$ has less than $l$ feasible solutions.
Then $V(F)=\emptyset$. Obviously $IP(G)$ has less feasible solutions than $IP(F)$
since $IP(G)$ includes already all constraints of $IP(F)$. Hence $V(G)=\emptyset=V(F).$

In the following, we assume that $IP(F)$ has at least $l$ feasible solutions.
For the  containment $V(F) \subseteq V(G)$ note that $V(F)$ can be alternatively characterized as the set of
constraints that violate or cut at least one  $x_i \in \vec{x}_l(F)$. These constraints cannot be in
$G$, by the assumption that $G \cap V(F) = \emptyset$. It follows that the set of constraints that remove
any of the $x_i \in \vec{x}_l(F)$ cannot be in $G$, so that the same points $x_i \in \vec{x}_l(G)$ for $i=1,...,l$. Thus
any constraint $m \in V(F)$ is in $V(G)$, so that $V(F) \subseteq V(G)$.
For the  containment $V(G) \subseteq V(F)$, observe that when $m \in G$, then $m \not \in V(F)$ (because $G \cap
V(F) = \emptyset$) , thus $\vec{x}_l(F \cup \{m\}) =
\vec{x}_l(F)$. In other words, we can add any constraint in $G
\setminus F$ to $F$ without changing  $\vec{x}_l(F)$. Add all those constraints
$m \in G \setminus F$. We then have $\vec{x}_l(F) = \vec{x}_l(G)$
since each additional constraint left the $l$-tuple unaltered. Because $\vec{x}_l(F) = \vec{x}_l(G)$
any constraint in $V(G)$, also violates at least one of the $x_i \in \vec{x}_l(F)$, so that $V(G) \subseteq V(F)$.
\cqfd
\end{proof}
%
Before we can apply the theory of violator spaces of  \cite{gaertneretal} we need two more ingredients.
First, just like a linear programming optimum is defined by a basis, we need to have a notion of basis for our optimal solutions.
\begin{definition}
Given a violator space $(H, V)$, we say that $B \subseteq H$ is a
\emph{basis}  if for all proper subsets $F \subset B$ we have $B
\cap V(F) \neq \emptyset$. For $G \subseteq H$, a basis of $G$ is a
minimal subset $B$ of $G$ that is a basis  and such that  $V(B) = V(G)$.

The \emph{combinatorial dimension} of a violator space $(H,V)$ is the maximal
cardinality of a basis.
\end{definition}
Intuitively a basis is a  minimal subset of constraints with the same optimal value as the whole set. Now we see that the size of
a basis is bounded by the constant of Theorem \ref{k-doignon}.
\begin{lemma}
The combinatorial dimension of $l$-th best IP with $n$ variables is bounded by the constant $c(n,l).$
\end{lemma}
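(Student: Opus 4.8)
The plan is to show that any basis $B$ of the $l$-th best IP, as defined above, has cardinality at most $c(n,l)$. Fix a basis $B\subseteq H$. Since $B$ is in particular a basis of itself, the consistency and locality arguments already established mean that removing any single constraint from $B$ strictly changes the $l$-tuple $\vec{x}_l$, i.e. every constraint in $B$ is \emph{necessary} for pinning down the $l$ best integer solutions of $IP(B)$. The natural route is to translate this necessity statement about an optimization problem into the ``exactly $k$ integer points in a polyhedron'' setting to which Theorem~\ref{k-doignon} applies, with $k=l$.

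First I would handle the degenerate case: if $IP(B)$ has fewer than $l$ feasible solutions then $V(B)=\emptyset$ and minimality forces $B=\emptyset$, so there is nothing to prove. Hence assume $IP(B)$ has at least $l$ feasible integer solutions, and let $\vec{x}_l(B)=(x_1,\dots,x_l)$ be the $l$ best under the fixed total order, with $x_l$ the worst of these. The key geometric step is to add one more halfspace to the system, namely the ``objective cut'' $c^Tx\le c^Tx_l$ (or, to respect the lexicographic tie-breaking, a slightly perturbed/lexicographic version of it that admits exactly $x_1,\dots,x_l$ and no other feasible point of $IP(B)$ that is $\le x_l$ in the order). The polyhedron $P$ defined by the constraints of $IP(B)$ together with this single extra inequality then contains exactly the $l$ integer points $x_1,\dots,x_l$. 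Moreover every original constraint of $B$ is still necessary for $P$: if some $a^{(i)}x\le b^{(i)}$ could be dropped without changing $P\cap\Z^n$, then dropping it from $IP(B)$ would not change the set of feasible points that are $\le x_l$, hence would not change $\vec{x}_l$, contradicting that $i$ lies in the basis $B$.

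Now apply Theorem~\ref{k-doignon} to $P$ with $k=l$: there is a subset $S$ of the (at most $|B|+1$) defining inequalities, with $|S|\le c(n,l)$, such that the polyhedron cut out by $S$ contains exactly the same $l$ integer points. By necessity of every constraint, $S$ must in fact be \emph{all} of them; in particular $S$ contains all $|B|$ original constraints, so $|B|\le |S|\le c(n,l)$. Taking the maximum over all bases gives that the combinatorial dimension is at most $c(n,l)$, as claimed.

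The main obstacle I expect is the careful handling of the tie-breaking rule when passing from ``$l$ best solutions'' to ``exactly $l$ integer points of a polyhedron.'' A single linear inequality $c^Tx\le c^Tx_l$ may admit extra integer points on the face $c^Tx=c^Tx_l$ that are worse than $x_l$ in the lexicographic order, so $P$ would then contain more than $l$ integer points. One clean fix is to observe that the lexicographic order is itself realized by a lexicographic linear functional, and to add the corresponding single ``generalized halfspace'' (or, equivalently, to perturb $c$ to a nearby rational vector inducing the same order on the finitely many relevant lattice points in the cube), so that exactly $x_1,\dots,x_l$ survive; the bookkeeping that this perturbation does not disturb necessity of the $B$-constraints is routine but must be stated. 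Everything else — the degenerate case, the reduction, and the final counting — is immediate from the results already proved.
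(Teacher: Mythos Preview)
Your approach is essentially the paper's: augment the basis constraints by the objective cut $c^Tx\le c^Tx_l$, apply Theorem~\ref{k-doignon} with $k=l$, and use that every constraint of a basis is individually necessary to conclude $|B|\le c(n,l)$. The paper glosses over the tie-breaking issue you correctly flag; its resolution (the perturbed objective $\bar c=u^nc+\sum_i u^{n-i}e_i$) appears only later, in the primitive-query lemma, and is exactly the fix you propose.

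One small gap: your degenerate case is not quite right. From $IP(B)$ having fewer than $l$ feasible points (hence $V(B)=\emptyset$) it does \emph{not} follow that a basis $B$ must be empty. For instance, with $n=1$, $l=2$, the pair $B=\{x\le 0,\ -x\le 0\}$ has a single integer solution yet is a basis, since removing either constraint produces at least two feasible points and hence a nonempty $V(B\setminus\{i\})$ containing $i$. The paper's remedy is simply to apply Theorem~\ref{k-doignon} to $IP(B)$ itself with $k=\bar l<l$: there is $S\subseteq B$ with $|S|\le c(n,\bar l)$ and the same $\bar l$ integer points; then for any $i\in B\setminus S$ one gets $IP(B\setminus\{i\})\cap\Z^n=IP(B)\cap\Z^n$, hence $V(B\setminus\{i\})=\emptyset$, contradicting the basis property. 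Thus $B=S$ and $|B|\le c(n,\bar l)\le c(n,l)$.
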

\begin{proof}
Consider a set of constraints $H$ and the corresponding $IP(H)$.
First assume that $IP(H)$ has $\bar l<l$ feasible  solutions.
Then there exists a subset of the constraints $G\subseteq H$ with cardinality at most
$c(n,\bar l)$ that has the same number of solutions and is therefore a basis.
Observe that $c(n,\bar l)\leq c(n,l).$

Assume now that $IP(H)$ has at least $l$ feasible solutions.
Considering the set of constraints $\{ c^T x \leq c^T x_l \} \cup H$,
where $x_l$ is the $l$-th best value in $\vec{x}_l(H)$, and applying
Theorem \ref{k-doignon}, we conclude that there exists a subset of the constraints
of cardinality at most $c(n,l)$ that define the same $l$ feasible solutions.
Getting rid of $\{ c^T x \leq c^T x_l \}$, we conclude that the size of the largest basis
of $H$ is $c(n,l)$.\cqfd
\end{proof}

The second ingredient is that we need to have a way to answer the following query in polynomial time in fixed dimension
for subsets of size smaller than the combinatorial dimension.

\vskip .2cm

\noindent {\bf Primitive query:} Given $G \subset H$ and $h \in H \setminus G$, decide whether $h \in V(G)$.

\vskip .2 cm

The reason we need to answer this query via a black-box method is because  then, using Theorem 27 in Section 4 of  \cite{gaertneretal}, we obtain the following result

\begin{lemma}
A basis of H in a violator space (H,V) (and thus an optimal solution to the problem) can be found by
calling the algorithm  that solves the primitive query an expected
$$O(c(n,l)|H|+ c(n,l)^{O(c(n,l))})$$ number of times.
\end{lemma}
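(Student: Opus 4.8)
The plan is to invoke the general analysis of Clarkson's algorithm for violator spaces as developed by G\"artner et al.\ \cite{gaertneretal}, and to verify that the $l$-th best IP fits the hypotheses of their Theorem 27. Having already established that the $l$-th best IP $(H,V)$ is a violator space and that its combinatorial dimension $\delta$ is bounded by $c(n,l)$, the only remaining work is bookkeeping: substitute $\delta \le c(n,l)$ into the expected running time bound of their algorithm and rewrite it in terms of the number of primitive query calls.

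Concretely, I would proceed as follows. First, recall that Clarkson's two-level random sampling algorithm, as stated in \cite{gaertneretal} for an abstract violator space $(H,V)$ of combinatorial dimension $\delta$, computes a basis of $H$ using an expected number of primitive query invocations of order $O(\delta |H| + \delta^{O(\delta)})$; the outer ``Clarkson-I'' stage contributes the $\delta|H|$ term (a linear pass over the constraints for each of an expected constant number of rounds), while the base case is solved by the brute-force ``Clarkson-III'' subroutine on a sample of size $O(\delta^2)$, which accounts for the $\delta^{O(\delta)}$ term. Second, I would observe that computing a basis of $H$ is exactly what is needed to determine $\vec{x}_l(H)$, the $l$ best solutions: once a basis $B$ with $V(B)=V(H)$ is in hand, $\vec{x}_l(B) = \vec{x}_l(H)$ by the locality property, and $|B|\le c(n,l)$ so this final tuple can be read off directly. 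Third, substituting the bound $\delta \le c(n,l)$ from the previous lemma into the expression $O(\delta|H| + \delta^{O(\delta)})$ yields the claimed bound $O(c(n,l)|H| + c(n,l)^{O(c(n,l))})$ on the expected number of calls to the primitive-query algorithm, since $c(n,l)$ depends only on the fixed quantities $n$ and $l$.

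I do not expect a serious obstacle here, since the heavy lifting has been done in the preceding lemmas and in \cite{gaertneretal}. The one point requiring care is to make sure that the abstract hypotheses of Theorem 27 of \cite{gaertneretal} are literally met: the violator space axioms (consistency and locality) are verified in the earlier lemma, the combinatorial dimension is finite and bounded by $c(n,l)$, and the primitive query ``is $h\in V(G)$?'' is the exact oracle their algorithm consults. One should also note, for completeness, that the expected-number-of-calls statement is a black-box bound: it counts oracle invocations and is agnostic to the cost of a single primitive query; the actual wall-clock running time is obtained by multiplying by the (polynomial, in fixed $n$) cost of answering one query, but that refinement is not part of the present statement. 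With these observations in place, the lemma follows immediately from \cite[Theorem 27]{gaertneretal} applied to the violator space $(H,V)$ of the $l$-th best IP.
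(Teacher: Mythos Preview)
Your proposal is correct and matches the paper's approach exactly: the paper does not give a standalone proof of this lemma at all, but simply states that it follows from Theorem~27 in Section~4 of \cite{gaertneretal}, which is precisely what you do (with the additional service of spelling out that the earlier lemmas supply the violator-space axioms and the bound $\delta \le c(n,l)$ needed to instantiate that theorem).
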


The primitive query originally provided by Clarkson in the case of regular integer programming was Lenstra's IP algorithm in fixed dimension.
For our problem, when we now look for the $l$-th best solution, the primitive query can be answered
by calling $l$ times an algorithm for IP in fixed dimension.
Given an integer program in fixed dimension $n$, a fixed number of constraints $q$ and a  maximum bit size of the data of $s$,
Eisenbrand provided in  \cite{eisenbrandfixeddim} an algorithm that finds an optimal
solution in $\mathcal O(s)$ operations.
Our primitive query calls a maximum of $l$ times Eisenbrand's algorithm in fixed dimension $n$ and constant number of constraints $c(n,l)$ to answer the question.
We make this explicit in the following lemma:

\begin{lemma}
Given fixed positive integers $q,n$ and $l$ and an integer program $\min\{c^Tx: Ax\leq b, x\in \Z^n, 0\leq x \leq u\}$ with $q$
constraints and a varying maximum encoding length of $s$ bits for the data, it is possible to determine in $\mathcal O(ls)$ operations
the $l$ best solutions to the integer program where ties are broken using lexicographic order.
\end{lemma}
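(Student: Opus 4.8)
The plan is to reduce the $l$-best problem to $\mathcal O(nl)$ calls of an integer-programming oracle in fixed dimension with a constant number of constraints, each of which Eisenbrand's algorithm \cite{eisenbrandfixeddim} answers in $\mathcal O(s)$ operations.

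First I would build a tie-breaking wrapper around Eisenbrand's algorithm that, given such an IP, returns not merely an optimal value but the lexicographically smallest optimal solution. This is done by a chain of $n+1$ calls: minimise $c^Tx$ to obtain the optimum $v_0$ and append the cut $c^Tx\le v_0$; then minimise $x_1$ over the enlarged system to obtain $w_1$ and append $x_1\le w_1$; then minimise $x_2$, and so on through $x_n$. Each call is again an IP in dimension $n$ with $q+\mathcal O(n)$ constraints (the box $0\le x\le u$ plus the cuts) and data of encoding length $\mathcal O(s)$, since the new right-hand sides $v_0,w_1,\dots$ have size $\mathcal O(s)$; hence it costs $\mathcal O(s)$, and the whole chain costs $\mathcal O(ns)=\mathcal O(s)$. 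The output is the unique optimum under the fixed lexicographic tie-break, so ``the $l$ best solutions'' is well defined.

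Next, to enumerate the $l$ best points I would run a Lawler-style partition scheme \cite{lawler}. Maintain a pool of subproblems, each of the form ``original constraints $Ax\le b$, box $0\le x\le u$, together with finitely many one-coordinate bounds $x_i\le\beta$ or $x_i\ge\alpha$ and equalities $x_i=a_i$'' --- equivalently, after substituting out the fixed coordinates, an IP of dimension $\le n$ with at most $q+\mathcal O(n+l)=\mathcal O(1)$ constraints. Start with the pool $\{\text{the original IP}\}$. Repeat $l$ times: solve each subproblem in the pool with the tie-breaking wrapper, pick the subproblem $\mathcal N$ whose optimum $x^{*}$ is smallest (under $c^Tx$, ties by lexicographic order); output $x^{*}$ as the next best solution; then replace $\mathcal N$ by the $2n$ subproblems $\mathcal N\cap\{\,x_j=x^{*}_j\ (j<i),\ \pm(x_i-x^{*}_i)\ge 1\,\}$ for $i=1,\dots,n$. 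These $2n$ sets are pairwise disjoint and their union is exactly $(\mathcal N\cap\Z^n)\setminus\{x^{*}\}$, so no already-reported point is ever reproduced and every not-yet-reported feasible point lies in exactly one current subproblem; hence the minimum over the pool is always the true next-best solution. (Only the newly created subproblems need re-solving at each round, by caching optima.)

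Finally I would bound the cost: each round creates at most $2n$ new subproblems, so after $l$ rounds the pool has size $\le 2nl+1$, and the total number of oracle calls --- each an $\mathcal O(s)$ invocation of the tie-breaking wrapper --- is $\mathcal O(nl)=\mathcal O(l)$ (recall $q,n,l$ are fixed), while selecting the minimum costs $\mathcal O(l^{2})$ comparisons, which is dominated. Altogether $\mathcal O(ls)$ operations, as claimed. The step that needs the most care is checking that the partition pieces stay within ``fixed dimension, constantly many constraints'' so that Eisenbrand's $\mathcal O(s)$ bound keeps applying after repeated branching --- handled by substituting out fixed variables (so the dimension only decreases) and by noting that along any root-to-node path at most $l$ extra bound inequalities are ever introduced --- together with the verification that the $2n$-way split is a genuine disjoint cover of $\mathcal N\setminus\{x^{*}\}$, which is exactly what prevents the naive blow-up to exponentially many subproblems.
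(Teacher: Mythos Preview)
Your argument is correct, but it takes a different route from the paper's own proof. The paper encodes the lexicographic tie-break directly into a single modified objective $\bar c := u^{n}c + \sum_{i=1}^{n} u^{\,n-i}e_i$, so that distinct feasible points in $\{0,\dots,u\}^{n}$ receive distinct integer $\bar c$-values and the $\bar c$-order coincides with the $(c,\text{lex})$ order. Once this is in place, the $l$ best solutions are obtained by solving just $l$ integer programs in dimension $n$ with $q+1$ constraints each: after finding $\bar x^{(i)}$ one simply appends the single cut $\bar c^{T}x \ge \bar c^{T}\bar x^{(i)}+1$ and re-solves. Every call is a single invocation of Eisenbrand's algorithm on data of bit size $O(ns)=O(s)$.

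Your approach instead handles tie-breaking by an $(n{+}1)$-step chain of IPs per subproblem, and handles the exclusion of already-found solutions by Lawler-style coordinate branching rather than by one objective cut. Both yield the claimed $\mathcal O(ls)$ bound for fixed $n,q,l$, but the paper's argument is noticeably shorter and uses only $l$ IP solves rather than $O(n^{2}l)$. On the other hand, your scheme is more generic --- it does not rely on the box $0\le x\le u$ to control the size of a perturbed objective --- and avoids the bit-size blow-up of the weights $u^{n-i}$; the price is exactly the bookkeeping you flag at the end (verifying that each branch stays in fixed dimension with $O(1)$ constraints and that the $2n$-way split really partitions $\mathcal N\setminus\{x^{*}\}$), all of which the paper's one-cut trick sidesteps.
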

\begin{proof}
We use the global bound $u$ on the variables, define $\bar c:= u^nc+\sum_{i=1}^n u^{n-i} e_i$  and  set up the auxiliary integer program
$\min \{\bar c^Tx: Ax\leq b, x\in \Z^n, 0\leq x\leq u\}.$
Observe that by solving the latter, we have the guarantee to find the best integer point with respect to the objective function
breaking ties lexicographically.
Denoting $\bar x^{(1)}$ the optimal solution to the above integer program, we can now find the second best integer
point by solving
$\min\{\bar c^Tx : Ax\leq b, x\in \Z^n, \bar c^Tx\geq \bar c^T\bar x^{(1)}+1,\ 0 \leq x\leq u\}.$
If we denote by $\bar x^{(i)}$ the $i^{th}$ best point, we obtain the $(i+1)^{th}$ best point by solving a similar integer
program with the additional constraint $\bar c^Tx\geq \bar c^Tx^{(i)}+1.$

This shows that we are able to compute the $l$ best solutions to an integer program by successively solving $l$ integer programs
in dimension $n$ with $q+1$ constraints and with bit size $ns$ which is of order $s$ when $n$ is fixed. The result follows by applying Eisenbrand's algorithm
to all integer programs.\cqfd
\end{proof}

We finally arrive at the key complexity consequence of this section:

\begin{corollary}
Given fixed positive integer constants $n$ and $l$, an integer $m \times n$ matrix $A$, and the integer linear program
\begin{align*}
\text{min } & c^T x \\
\text{subject to }& Ax \leq b \\
x \in \Z^n
\end{align*}
of  maximum bit-size $s$ for the coefficients in $A$, then the $l$-th best solution can be computed in a expected

$$O( (ls)(c(n,l)m+ c(n,l)^{O(c(n,l))})$$ number of operations.
\end{corollary}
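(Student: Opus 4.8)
The plan is to assemble the machinery developed in this section into a single pipeline. First I would reduce the given program to one with a bounded feasible region. Since $n$ is fixed, any vertex of the linear relaxation of any subsystem of $Ax\le b$ has coordinates of bit-size $O(s)$, and standard a-priori bounds on integer optima in fixed dimension show that, whenever the $l$ best integer solutions of the full program exist, they can be found inside a box $-u\le x\le u$ with $\log u=O(s)$. Adjoining the $2n$ box inequalities to the rows of $A$ produces a constraint set $H$ with $|H|=m+2n=O(m)$ and data of bit-size still $O(s)$; the polytope is now bounded, so Theorem \ref{k-doignon} applies, and by the lemmas proved above the associated $l$-th best IP is a violator space whose combinatorial dimension is at most $c(n,l)$. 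The added box constraints do not change $\vec{x}_l(H)$ for the original feasible instance (and if $IP(H)$ has fewer than $l$ feasible solutions this is detected, in which case there is no $l$-th best solution to report).

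Next I would bound the cost of a single \emph{primitive query}. Deciding whether $h\in V(G)$ for a set $G$ of at most $c(n,l)$ constraints amounts, by the definition of $V$, to computing $\vec{x}_l(G)$ and $\vec{x}_l(G\cup\{h\})$ and comparing the two $l$-tuples; by the preceding lemma each such tuple is obtained by $l$ successive calls to Eisenbrand's fixed-dimension algorithm \cite{eisenbrandfixeddim} on an integer program with $n$ variables, $O(c(n,l))=O(1)$ constraints and bit-size $O(s)$. Hence the primitive query is answered in $O(ls)$ operations.

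Finally I would invoke the Clarkson-style randomized algorithm for violator spaces. By the lemma quoting Theorem~27 of \cite{gaertneretal}, a basis $B$ of $H$ can be computed by calling the primitive query an expected $O(c(n,l)\,|H|+c(n,l)^{O(c(n,l))})$ times; from such a basis one more round of $l$ calls to Eisenbrand's algorithm recovers $\vec{x}_l(B)=\vec{x}_l(H)$, whose last coordinate is the desired $l$-th best solution. Multiplying the expected number of queries by the $O(ls)$ cost per query and using $|H|=O(m)$ yields the claimed expected running time $O((ls)(c(n,l)\,m+c(n,l)^{O(c(n,l))}))$.

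The main obstacle is the reduction in the first paragraph: one must ensure that the primitive query is meaningful even for the very small subsystems $G$ that the Clarkson algorithm inspects. Such a $G$ on its own may be unbounded or infeasible, so ``the $l$ best solutions of $IP(G)$'' only makes sense once the box constraints are forced to be present in $H$; the delicate point is to pick the bound with $\log u=O(s)$ large enough not to cut any of the $l$ genuine best solutions of the full program yet small enough to keep all auxiliary programs at bit-size $O(s)$ and constraint count $O(1)$, so that Eisenbrand's $\mathcal{O}(s)$-time algorithm and the constant $c(n,l)$ both apply. Everything else is a direct composition of the lemmas already established.
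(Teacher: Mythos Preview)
Your proposal is correct and follows the same approach as the paper, which treats this corollary as an immediate consequence of the preceding lemmas (violator-space structure, combinatorial dimension $\le c(n,l)$, the Clarkson/G\"artner--Matou\v{s}ek--R\"ust--\v{S}kovro\v{n} query count, and the $O(ls)$ primitive query via Eisenbrand). You are actually more explicit than the paper about the boundedness reduction: the paper's primitive-query lemma already assumes box constraints $0\le x\le u$, but the corollary drops them without comment, whereas you spell out that one must adjoin $2n$ box inequalities of bit-size $O(s)$ and keep them present so that every $IP(G)$ the Clarkson procedure inspects is bounded; this is a genuine point the paper elides, and your handling of it is the right one.
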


We conclude by noting that using the theory of rational generating functions (see \cite{AGTO} for an introduction) one can also prove a similar result with some worse complexity.

\section*{Acknowledgements} The research of the third author was partially supported by a UCMEXUS project grant and by the Institute for Mathematics and its Applications with funds provided by the National Science Foundation. The research was carried out during the fourth author's sabbatical visit at UC Davis supported by the Belgian Science Foundation (FRS-FNRS) and UC Davis. The authors are truly grateful to the anonymous referees for their excellent detailed comments and corrections that greatly improved the quality of this paper. We are also grateful to Robert Hildebrand, Luis Montejano, Timm Oertel, Deborah Oliveros, J\'anos Pach, Edgardo Rold\'an Pensado, and Stefan Weltge for their very useful comments.


\bibliographystyle{plain}

\begin{thebibliography}{10}

\bibitem{ipcoversion} I. Aliev, J.A. De Loera, and  Q. Louveaux, { ``Integer Programs with Prescribed Number of Solutions and a Weighted Version of Doignon-Bell-Scarf's Theorem''}, in \emph{Proceedings of Integer Programming and Combinatorial Optimization, 17th International IPCO Conference}, Bonn Germany, June, 2014.

\bibitem{amenta} N.~Amenta,{ ``Helly-type theorems and generalized linear programming''}, \emph{Discrete and Computational Geometry} 12: (1994), 241--261.

\bibitem{averkovweismantel} G.~Averkov and R.~Weismantel, ``Transversal numbers over subsets of linear spaces''. {\em Adv. Geom.} 12 (2012), no. 1, 19--28.

\bibitem{ipfarkaslemma} K.~Andersen, Q.~Louveaux, and R.~Weismantel. {``Certificates of linear mixed integer infeasibility''}, \emph{Operations Research Letters}, 36 (2008), 734--738.

\bibitem{lpf} K.~Andersen, Q.~Louveaux, and R.~Weismantel. {``An analysis of mixed integer linear sets based on lattice point free convex sets''}, \emph{Math of Operations Research}, 35 (2010), 233--256.

\bibitem{andersenetal} K. Andersen, Q. Louveaux, R. Weismantel, and L.~Wolsey.{ `` Inequalities from two rows of the simplex tableau''},
in \emph{M. Fischetti \& D. P., Williamson (Eds.) Integer Programming and Combinatorial Optimization, 12th International IPCO Conference, Ithaca, NY, USA, June 25-27, 2007, Proceedings, Lecture Notes in Computer Science} 4513, 1--15.

\bibitem{baranykatchalskipach} I. B\'ar\'any, M. Katchalski, and J. Pach. {``Quantitative Helly-type theorems''}, \emph{Proc. Amer. Math. Soc.}, 86 (1982), 109--114.

\bibitem{baranykatchalskipach2} I. B\'ar\'any, M. Katchalski, and J. Pach. {`` Helly's theorem with volumes.''} \emph{Amer. Math. Monthly,} 91 (6):362--365, 1984.

\bibitem{barvinokpommersheim} A. Barvinok and J. Pommersheim. {``An algorithmic theory of lattice points in polyhedra''}.
\emph{New perspectives in algebraic combinatorics, Math. Sci. res. Inst. Publ.}, 38, Cambridge Univ. Press, Cambridge, (1999), 91-147.

\bibitem{bell} D.E. Bell,  {``A theorem concerning the integer lattice''}. {\em Studies in Applied Mathematics}, 56(1), (1977), 187--188.

\bibitem{borozancornuejols} V.~Borozan and G. Cornu\'ejols, {``Minimal valid inequalities for integer constraints''}. {\em Math. Oper. Res.} 34 (2009), no. 3, 538--546.

\bibitem{clarkson} K.L. Clarkson, {``Las Vegas algorithms for linear and integer programming when the dimension is small''}, {\em Journal of the ACM} (1995), 42 (2), 488--499.

\bibitem{lpfsurvey} M. Conforti, G. Cornu\'ejols, and G. Zambelli, {``Corner polyhedron and intersection cuts''}.
\emph{Surveys in Operations Research and Management Science}, 16 (2011), 105--120.

\bibitem{danzergrunbaumklee}  L.~Danzer, B.~Gr\"unbaum, and V.~Klee, {``Helly's theorem and its relatives''}, in 1963 Proc. Sympos. Pure Math., Vol. VII pp. 101"1¤7180 Amer. Math. Soc., Providence, R.I.

\bibitem{AGTO} J.A. ~De Loera, R.~Hemmecke, M.~K\"oppe, {``Algebraic and geometric ideas in the theory of discrete optimization''} \emph{MOS-SIAM Series on Optimization, 14. Society for Industrial and Applied Mathematics (SIAM)}, Philadelphia, PA; Mathematical Optimization Society, Philadelphia, PA, 2013. xx+322 pp.

\bibitem{deywolsey} S.~Dey and L.~Wolsey, {\em ``Constrained infinite group relaxations of MIPs''}. \emph{SIAM J. Optim}. 20 (2010), no. 6, 2890--2912.

\bibitem{doignon} J-P. Doignon, { ``Convexity in cristallographical lattices'',} {\em Journal of Geometry} 3.1 (1973), 71--85.

\bibitem{eckhoff} J.~Eckhoff,  {``Helly, Radon, and Carath\'eodory type theorems''} in {\em Handbook of convex geometry}, Vol. A, B, 389"1¤7448, North-Holland, Amsterdam, 1993.

\bibitem{eisenbrandfixeddim} F. Eisenbrand, {``Fast integer programming in fixed dimension''}, \emph{Algorithms-ESA} (2003), 196--207.

\bibitem{gaertneretal} B. G\"artner, J. Matou\v{s}ek, L. R\"ust, and P \v{S}kovro\v{n}. {``Violator spaces: structure and algorithms''}, \emph{Discrete Applied Mathematics}, 156 (11): (2008), 2124--2141.

\bibitem{hamacherqueyranne} H.W.~Hamacher and M.~Queyranne, {``K best solutions to combinatorial optimization problems''}. \emph{Ann. Oper. Res.} 4 (1985), no. 1-4, 123--143.

\bibitem{hoffman} A.J.~Hoffman,  {``Binding constraints and Helly numbers''} in Proceedings of  Second International Conference on Combinatorial Mathematics (New York, 1978), pp. 284"1¤7288,  {\em Ann. New York Acad. Sci.}, 319, New York Acad. Sci., New York, 1979.

\bibitem{lagariasziegler} J.C. Lagarias and G.M. Ziegler, {``Bounds for lattice polytopes containing a fixed number of interior points in a sublattice''}.
{\em Canad. J. Math.} 43, no. 5, (1991), 1022--1035.

\bibitem{lawler} E.L.~Lawler,  ``A procedure for computing the K-best solutions to discrete optimization problems and its application to the shortest path problem''.
{\em Management Sci.} 18 (1971/72), 401--405.

\bibitem{pikhurko} O. Pikhurko, {``Lattice points in lattice polytopes''}. {\em Mathematika} 48, no. 1-2, (2003), 15--24.

\bibitem{scarf} H.E.~Scarf, {``An observation on the structure of production sets with indivisibilities''},
{\em Proceedings of the National Academy of Sciences} 74.9 (1977), 3637--3641.

\bibitem{schrijver} A.~Schrijver, {``Theory of linear and integer programming''}  \emph{Wiley-Interscience Series in Discrete Mathematics} A Wiley-Interscience Publication. John Wiley \& Sons, Ltd., Chichester, 1986. xii+471 pp.

\bibitem{sharirwelzl} M.~Sharir, E.~Welzl, {\em ``A combinatorial bound for linear programming and related problems''}, in \emph{Proceedings of 9th Annual Symposium on Theoretical Aspects of Computer Science (STACS),  Lecture Notes in Computer Science} 577, Springer-Verlag, (1992), 567--579.

\bibitem{wenger} R.~Wenger, {\em ``Helly-type theorems and geometric transversals''} in \emph{Handbook of discrete and computational geometry},
Handbook of discrete and computational geometry. Second edition. Edited by Jacob E. Goodman and Joseph O'Rourke. Discrete Mathematics and its Applications (Boca Raton). Chapman \& Hall/CRC, Boca Raton, FL, 2004. xviii+1539 pp.

\end{thebibliography}

\end{document}